\newcommand \url [1] {\texttt{#1}}
\newenvironment{enumeratei}{\begin{enumerate}[\upshape (i)]} {\end{enumerate}}
\numberwithin{equation}{section}
\newcommand \gyemant {$\pmb\lozenge$}
\theoremstyle{plain}
 \newtheorem{theorem}{Theorem}[section]
 \newtheorem{lemma}[theorem]{Lemma}
 \newtheorem{proposition}[theorem]{Proposition}
 \newtheorem{corollary}[theorem]{Corollary}
\theoremstyle{definition}
\theoremstyle{remark}
\theoremstyle{plain} 
\theoremstyle{definition} 
\theoremstyle{remark} 
\newcommand\semmi[1] {}
\newcommand\piros[1]{{\textcolor{red}{#1}}}
\newcommand\tudnivalo [1] {}  
\newcommand \temphint [1]{}
\newcommand \skinfo [1] {}
\newcommand \dsty [1] {\displaystyle{#1}}
\newcommand \ssty [1] {\scriptstyle{#1}}
\newcommand \sssty [1] {\scriptscriptstyle{#1}}
\newcommand \tbf[1] {\textbf{#1}}
\renewcommand\phi{\varphi}
\renewcommand\rho{\varrho}   
\renewcommand\epsilon{\varepsilon} 
\renewcommand \theta {\vartheta}   
\newcommand \balpha {{\boldsymbol{\alpha}}}
\newcommand \bbeta  {\boldsymbol{\beta}}
\newcommand \bgamma{\boldsymbol{\gamma}}
\newcommand \bdelta{\boldsymbol{\delta}}
\newcommand \bomega{\boldsymbol{\omega}}
\newcommand \Jir [1] {\textup{Ji}\,#1} 
\newcommand \binomial [2] {{{#1}\choose{#2}}}
\newcommand \mySseven {N_7}
\newcommand \leftb [1]  {\textup{BC}_{{\textup{l}}}(#1)} 
\newcommand \rightb [1] {\textup{BC}_{{\textup{r}}}(#1)} 
\newcommand \ilupphold [2] {\textup{sp}_{\sssty{\textup{left}}}^{\kern-2pt +\kern 2 pt #2}(#1)} 
\newcommand \irupphold [2] {\textup{sp}_{\sssty{\textup{right}}}^{\kern-2pt +\kern 2pt #2}(#1)} 
\newcommand \PIntrv [1] {\textup{PrInt}(#1)} 
\newcommand \smLatsign {\textup{SSL}}
\newcommand \multi [1] {{\ddot{#1}}}
\newcommand \multismLatsign {\textup{SS}\multi{\textup{L}}}
\newcommand \dsmLat[1] {\smLatsign{}^{\kern-1pt\delta\kern-1pt}(#1)}
\newcommand \dpsmLat[1] {\kern-1pt{\multismLatsign}{}^{\kern-1pt\delta\kern-1pt}(#1)}
\newcommand \dismLat[1] {\smLatsign{}^{\kern-1pt\delta\kern-1pt}(#1)\fentjelkong1}
\newcommand \dipsmLat[1] {{\multismLatsign}{}^{\kern-1pt\delta\kern-1pt}(#1)\fentjelkong1}
\newcommand \fentjelkong[1]{{}^{\kern- #1pt \mathord{\cong}}}
\newcommand \fentjelsim[1]{{}^{\kern- #1pt \mathord{\sim}}}
\newcommand \sampleclssign {\mathcal K}
\newcommand \dsampleclasps[1]  {\sampleclssign{}^{\kern0pt\delta}(#1)}
\newcommand \disampleclasps[1] {\sampleclssign{}^{\kern0pt\delta}(#1)\fentjelkong0}
\newcommand \LatCompSerSign {\textup{CSL}}
\newcommand \multiLatCompSerSign {\textup{CS}\multi{\textup{L}}}
\newcommand \dLcs  [1] {\LatCompSerSign{}^{\kern0pt\delta}(#1)}
\newcommand \dpLcs [1] {{}^{\kern0pt\delta}\kern-1pt\multi{\LatCompSerSign}(#1)}
\newcommand \diLcs [1] {\LatCompSerSign{}^{\kern0pt\delta}(#1)\fentjelkong1}
\newcommand \dipLcs[1] {{\multiLatCompSerSign}{}^{\kern0pt\delta}(#1)\fentjelkong1}
\newcommand \blockk [2] {#1/#2}
\newcommand \Seg [1] {\textup{Seg}(#1)}
\newcommand \head [1] {\textup{head}(#1)}
\newcommand \body [1] {\textup{body}(#1)}
\newcommand \pdecomp [1] {(\head{#1},\body{#1})}
\newcommand \pblock [1] {[#1]^{\kern-1pt\mathord{\sim}}}
\newcommand \simfactor [1] {#1/{\kern-1pt\mathord{\sim}}}
\newcommand \Perniksign {P}
\newcommand \Pernik [2] {\Perniksign (#1,#2)} 
\newcommand \perniksign {p}
\newcommand \pernik [2] {\perniksign (#1,#2)} 
\newcommand \Tperniksign{I}
\newcommand \Tpernik [2] {\Tperniksign(#1,#2)} 
\newcommand \tperniksign{i}
\newcommand \tpernik [2] {\tperniksign(#1,#2)} 
\newcommand \Bpernik [2] {P^{\kern-1pt\mathord{\sim}}(#1,#2)} 
\newcommand \bperniksign {p^{\kern-1pt\mathord{\sim}}} 
\newcommand \bpernik [2] {\bperniksign(#1,#2)} 
\newcommand \Bhpernik [4] {P^{\kern-1pt\mathord{\sim}}_{#1,#2}(#3,#4)} 
\newcommand \bhpernik [4] {p^{\kern-1pt\mathord{\sim}}_{#1,#2}(#3,#4)} 
\newcommand \Ipernik [2] {\widehat{\kern 1pt P}(#1,#2)} 
\newcommand \iperniksign{\widehat {\kern 1pt p} } 
\newcommand \ipernik [2] {\iperniksign(#1,#2)} 
\newcommand \Itpernik [2] {\widehat{\kern 1pt I}(#1,#2)} 
\newcommand \itperniksign {\widehat {\kern 1pt i}}
\newcommand \itpernik [2] {\itperniksign(#1,#2)} 
\newcommand \restrict [2] {{#1}\kern-1pt \rceil_{\kern-1pt #2}}
\newcommand\set [1]{\{#1\}}
\newcommand \bigset[1] {\bigl\{#1\bigr\}} 
\newcommand \ijcell [2] {\textup{cell}(#1,#2)}
\newcommand \scells [1]   {\textup{SCells}(#1)} 
\newcommand \celldn [1] {\textup{cell}_{\kern-0.3pt\sssty{\mathord{\lozenge}}}(#1)}
\newcommand \cellup [1] {\textup{cell}^{\kern-0.7pt\sssty{\mathord{\lozenge}}}(#1)}
\newcommand \cgjcon [1] {\textup{con}_{\vee}\kern-1pt(#1)} 
\newcommand \boper [1] {#1^{\kern-1pt\ssty{{\mathord=\kern-5pt\mathord{\parallel}}}}}
\newcommand \kernit {{\kern -6pt}}
\newcommand \tbu[1] {{ \phantom{\Big|} \kern -2pt \boxed{\tbf{#1}}  }}  
\newcommand \tbw[1] {{ \phantom{\Big|} \kern -2pt \boxed{\tbf{#1}}\kernit}}  
\newcommand \vakvec [1] {\vec{#1}}
\newcommand \secie {\boldsymbol{\rho}_{\kern -1pt e}^i}
\newcommand \numssl [1] {N_{\textup{ssl}}(#1)}
\newcommand \numssd [1] {N_{\textup{ssd}}(#1)}
\newcommand \bpbeta[1] {\boldsymbol{\beta}_{\kern-1pt #1}}
\newcommand \joing {\mathrel{\mathord\vee_{\kern -2pt G}}}
\newcommand \joinl {\mathrel{\mathord\vee_{\kern -2pt L}}}
\newcommand \joinvg {\mathrel{\mathord\vee_{\kern -2pt G'}}}
\newcommand \diai [1] {#1^{{\kern-0.7pt\natural}}}
\newcommand \diah [1] {#1{}^{{\kern-0.7pt  \sssty{{\triangledown}} }}}
\newcommand \diag [1] {#1{}^{{\kern-0.7pt\ast}}}
\newcommand \diac [1] {(#1)^{{\kern-0.7pt\diamond}}}
\newcommand \diavarc [1] {#1^{{\kern-0.7pt\diamond}}}
\newcommand \semipatch  {{ \,\pmb{\mathcal H}\kern 0.5pt}} 
\newcommand \patch [1] { \pmb{\mathcal P}_{\kern-2pt\textup{max}}(#1)} 
\newcommand \aslim [1] {#1^{\kern-1pt \bullet}} 
\newcommand \Ker[1] {\textup{Ker}\,#1} 
\newcommand \cproj {\mathrel{ \mathord{\Rightarrow} \kern-7.5pt \mathord{\Rightarrow} }} 
\newcommand \cpreq {\mathrel{ \mathord{\Leftarrow}  \kern-7.5pt \mathord{\Leftrightarrow} \kern-7.5pt \mathord{\Rightarrow} }} 
\newcommand \uppers {\,{\buildrel{\sssty{\textup{up}}}\over \rightarrow}\kern-8pt\mathord{\rightarrow}\; } 
\newcommand \upperpa [1] {\,{\buildrel{\sssty{\textup{up}}}\over \rightarrow}\kern-8pt\mathord{\rightarrow}_{#1}\; } 
\newcommand \dnpers {\,{\buildrel{\sssty{\textup{dn}}}\over \rightarrow}\kern-8pt\mathord{\rightarrow}\; } 
\newcommand \dnperpa [1] {\,{\buildrel{\sssty{\textup{dn}}}\over \rightarrow}\kern-8pt\mathord{\rightarrow}_{#1}\; } 
\newcommand \id {\textup{id}} 
\newcommand \drestrict [2] {{#1}\rceil_{\kern-1pt #2}} 
\newcommand \inv [1] { \textup{inv} (#1) }
\newcommand \rspace {\kern 0.8pt}
\begin{document}
\title
{On the number of slim, semimodular lattices}
\author[G.\ Cz\'edli]{G\'abor Cz\'edli}
\email{czedli@math.u-szeged.hu}
\urladdr{http://www.math.u-szeged.hu/$\sim$czedli/}
\address{University of Szeged\\Bolyai Institute\\
Szeged, Aradi v\'ertan\'uk tere 1\\HUNGARY 6720}


%
\author[T.\ D\'ek\'any]{Tam\'as D\'ek\'any}
\email{dekany@math.u-szeged.hu}
\urladdr{http://www.math.u-szeged.hu/$\sim$dekany/}
\address{University of Szeged\\Bolyai Institute\\
Szeged, Aradi v\'ertan\'uk tere 1\\HUNGARY 6720}


\author[L.\ Ozsv\'art]{L\'aszl\'o Ozsv\'art}
\email{ozsvartl@math.u-szeged.hu}
\urladdr{http://www.math.u-szeged.hu/$\sim$ozsvart/}
\address{University of Szeged\\Bolyai Institute\\
Szeged, Aradi v\'ertan\'uk tere 1\\HUNGARY 6720}


\author[N.\ Szak\'acs]{N\'ora Szak\'acs}
\email{szakacs@math.u-szeged.hu}
\urladdr{http://www.math.u-szeged.hu/$\sim$szakacs/}
\address{University of Szeged\\Bolyai Institute\\
Szeged, Aradi v\'ertan\'uk tere 1\\HUNGARY 6720}


\author[B.\ Udvari]{Bal\'azs Udvari}
\email{udvarib@math.u-szeged.hu}
\urladdr{http://www.math.u-szeged.hu/$\sim$udvari/}
\address{University of Szeged\\Bolyai Institute\\
Szeged, Aradi v\'ertan\'uk tere 1\\HUNGARY 6720}

\thanks{2010 \emph{Mathematics Subject Classification.} 06C10.}

\thanks{This research was supported by the NFSR of Hungary (OTKA), grant numbers   K77432 and K83219, and by T\'AMOP-4.2.1/B-09/1/KONV-2010-0005 and  
T\'AMOP-4.2.2/B-10/1-2010-0012}


\keywords{Composition series, Jordan-H\"older theorem, counting lattices, semimodularity,  slim lattice, planar lattice, slim semimodular lattice}

\date{June 5, 2012}

\begin{abstract} A lattice $L$ is \emph{slim} if it is finite and the set of its join-irreducible elements contains no three-element antichain. Slim, semimodular 
lattices were previously characterized by G.~Cz\'edli and E.\,T.~Schmidt~\cite{czgschperm}
as the duals of the lattices consisting of the intersections of the members of two composition series in a group.
Our main result determines the  number of (isomorphism classes of) these lattices of a given size in a recursive way. The corresponding planar diagrams, up to similarity, are also enumerated. 
We prove that the number of diagrams of slim, distributive lattices of a given length $n$ is the $n$-th Catalan number. Beside  lattice theory, the paper includes  some combinatorial arguments on permutations and their inversions.
\end{abstract}

\maketitle

\section{Introduction and target}\label{section:intRo}

The well-known concept of a composition series in  a group goes back to \'Evariste Galois (1831), see J.\,J.\ Rotman~\cite[Thm.\ 5.9]{rRotman}. The Jordan-H\"older theorem, stating that any two  composition series of a finite group have the same length, was also proved in the nineteenth century, see C.\ Jordan~\cite{rJordan} 
 and O.\ H\"older~\cite{rHolder}.
Let 
\begin{equation}\label{tWocomPsezwsS}
\begin{aligned}
\vakvec H&:\quad G= H_0 \triangleright H_1 \triangleright \cdots \triangleright H_h =\set 1\text{ and} \cr
\vakvec K&:\quad G=K_0 \triangleright K_1  \triangleright \cdots \triangleright K_h =\set1
\end{aligned}
\end{equation}
be  composition series of a group $G$. Consider the following structure: 
\[
\bigl(\bigset{H_i\cap K_j: i,j\in\set{0,\ldots,h}},\subseteq \bigr)\text.
\]
It is a lattice, a so-called \emph{composition series lattice}. The study of these lattices led  G.~Gr\"atzer and J.\,B.  Nation~\cite{gratzernation} and 
G.~Cz\'edli and  E.\,T.~Schmidt~\cite{r:czg-sch-JH}
to recent generalizations of the Jordan-H\"older theorem. 
In order to give an abstract characterization of these lattices,  G.~Cz\'edli and  E.\,T.~ Schmidt~\cite{czgschperm}  proved that composition series lattices are exactly the duals of slim, semimodular lattices, to be defined later. (See also \cite{r:czgolub} for a more direct approach to this result.)

Here we continue the investigations started by G. Cz\'edli, L. Ozsv\'art, and B. Udvari~\cite{r:czgolub}.
Our main goal is determine the number $\numssl n$ of slim, semimodular lattices (equivalently, composition series lattices) of a given size  $n$. 
Isomorphic lattices are, of course,  counted only once. These lattices of a given length were previously enumerated in \cite{r:czgolub}; however, the present task is subtler.  Since slim lattices are planar by G.~Cz\'edli and E.\,T.~Schmidt~\cite[Lemma 2.2]{r:czg-sch-JH}, we are also interested in the number of their planar diagrams. 
Due to the fact that we count \emph{specific} lattices, we  give a recursive description for   $\numssl n$ that is  far more efficient than the best known way to count \emph{all} finite lattices of a given size~$n$; see  J.\ Heitzig and J.\ Reinhold~\cite{r:heitzigreinhold} and the references therein.  
We also enumerate the planar diagrams of slim, semimodular lattices of size $n$, up to similarity to be defined later.

\subsection*{Outline} Section~\ref{lattsecT} belongs to Lattice Theory. After presenting the necessary concepts, it reduces the targeted problems to combinatorial problems on permutations. Section~\ref{countsection} belongs to Combinatorics. Theorem~\ref{thmmaIn} determines the number of slim, semimodular lattices consisting of $n$ elements. Proposition~\ref{prodzSkW} gives the number of the planar diagrams of slim, semimodular lattices of size $n$ such that similar diagrams are counted only once. The number of planar diagrams of slim, distributive lattices of a given length is proved to be a Catalan number in Proposition~\ref{pRdDlNgTnh}.

\section{From slim, semimodular lattices to permutations}\label{lattsecT}
\subsection*{An overview of slim,  semimodular lattices} 
Since G.~Cz\'edli and G.~Gr\"atzer~\cite[Theorem 1-3.5]{ggwltsta} is not generally available when writing this paper, we usually recall the necessary prerequisites from the original sources. 
All lattices occurring in this paper are assumed to be finite. 
The notation is taken from G.~Gr\"atzer~\cite{r:Gr-LTFound}. In particular,  the set of non-zero join-irreducible elements of a lattice $L$ is denoted by $\Jir L$. If $\Jir L$ is a union of two chains (equivalently, if $\Jir L$ contains no three-element antichain), then $L$ is called a \emph{slim} lattice. 
Slim lattices are \emph{planar} by G.~Cz\'edli and E.\,T.~Schmidt~\cite[Lemma 2.2]{r:czg-sch-JH}. That is, they possess  planar diagrams. Let $D_1$ and $D_2$ be planar lattice diagrams. A bijection $\phi\colon D_1\to D_2$ is a \emph{similarity map} if it is a lattice isomorphism and for all $x, y, z \in D_1$ such that $x\prec y$ and $x \prec  z$, $y$ is to the left of $z$ if{}f $\phi(y)$ is to the left of $\phi(z)$. 
Following D.~Kelly and I.~Rival~\cite[p.\, 640]{kellyrival}, we say that $D_1$ and $D_2$ are \emph{similar lattice diagrams}  if there exists a
similarity map $D_1 \to D_2$. We always consider and count planar diagrams up to similarity. Also, we consider only planar diagrams. A diagram is slim if it represents a slim lattice; other lattice properties apply for diagrams analogously. For example, a diagram is \emph{semimodular} if so is the corresponding lattice $L$; that is, if for all $x,y,z\in L$ such that $x\preceq y$, the covering or equal relation $x\vee z\preceq y\vee z$ holds.
\begin{figure}
\centerline
{\includegraphics[scale=1.0]{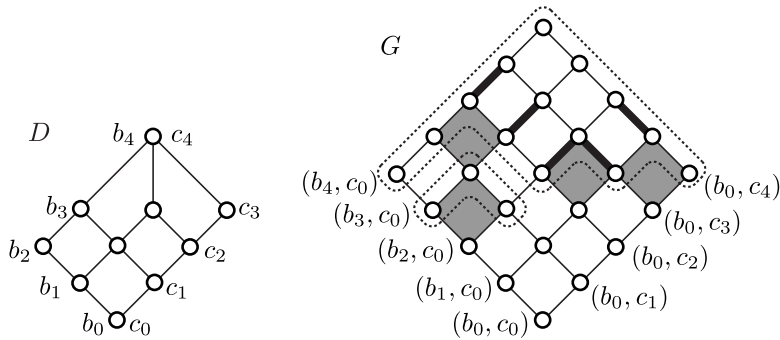}}
\caption{A diagram $D$ and the corresponding grid diagram $G$  \label{fig:egy}}
\end{figure}

Let $D$ be a planar diagram of a slim lattice $L$ of length $h$. Note that $L$ may have several non-similar diagrams since we can reflect $D$ (or certain intervals of $D$)  vertically. The left boundary chain of $D$ is denoted by $\leftb D$, while $\rightb D$ stands for its right boundary chain. These chains are maximal chains in $L$, and both are of length $h$ by semimodularity. 
So we can write
\begin{equation}\label{lrlchNps}
\begin{aligned}
\leftb D&=\set{0=b_0\prec b_1\prec\cdots\prec b_h}\text{ and }\cr
\rightb D&=\set{0=c_0\prec c_1\prec\cdots\prec c_h}\text.
\end{aligned}
\end{equation}

\subsection*{The permutation of a slim,  semimodular lattice} 
The present paper is based on the fundamental connection between planar, slim, semimodular diagrams and permutations. In  this and the next subsections, we recall and develop the details of this connection in a way that fits \cite{r:czgolub}, where the enumerative investigations of slim,  semimodular lattices start. %
The following statement is a particular case of G.~Cz\'edli and G.~Gr\"atzer~\cite[Theorem 1-3.5]{ggwltsta}; it can also be extracted from G.~Cz\'edli and E.\,T.~Schmidt~\cite[Lemmas 6 and 7]{r:czg-sch-visual} combined with \cite[Proof 4.7]{r:czg-sch-patch}.
\begin{figure}
\centerline
{\includegraphics[scale=1.0]{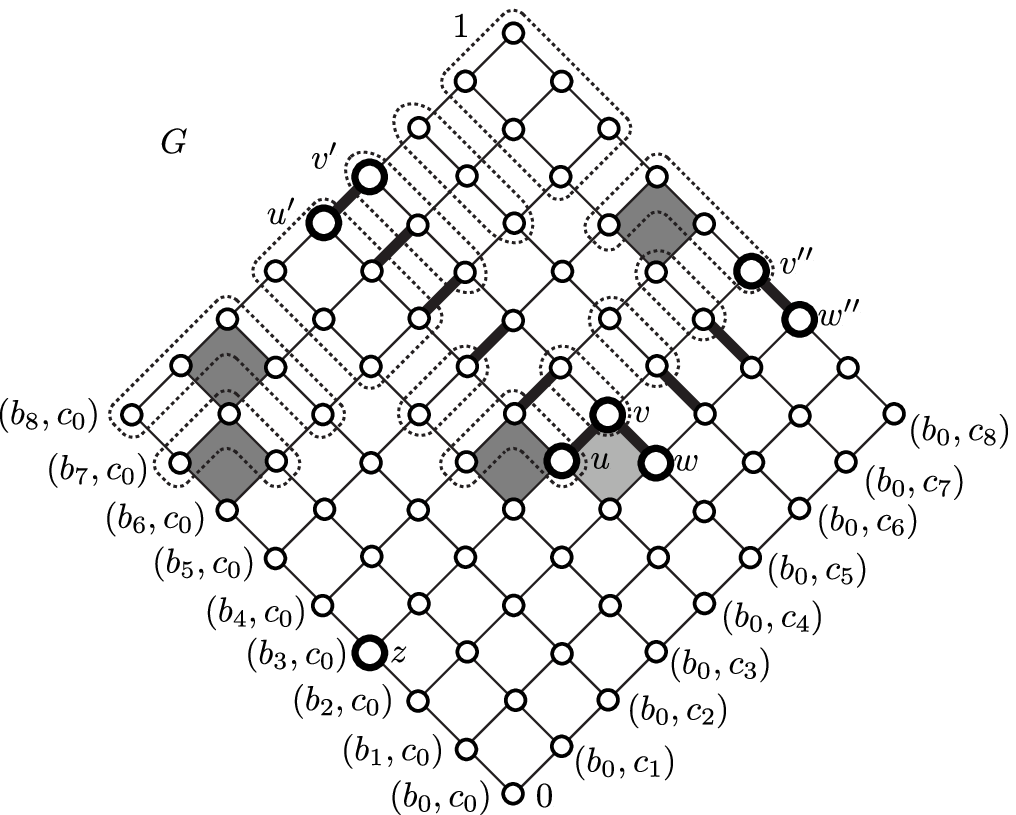}}
\caption{A grid  \label{fig:ket}}
\end{figure}

\begin{lemma}\label{simLmaPlD}
Assume that $D$ and $E$ are planar diagrams of a slim, semimodular lattice  $L$. Then $D$ is similar to $E$ if{f} $\leftb D=\leftb E$ if{f} $\rightb D=\rightb E$.
\end{lemma}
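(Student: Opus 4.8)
The plan is to deduce both equivalences from the single equivalence $D\sim E \iff \leftb D=\leftb E$, using the mirror symmetry of planar diagrams. Reflecting a diagram across a vertical axis turns a planar, slim, semimodular diagram $D$ of $L$ into another such diagram $\mirror D$ of the \emph{same} lattice $L$; this reflection interchanges the two boundary chains, so that $\leftb{\mirror D}=\rightb D$, and it respects similarity, in the sense that $D\sim E$ holds iff $\mirror D\sim\mirror E$. Hence, once the left-boundary version is proved, applying it to $\mirror D$ and $\mirror E$ yields $D\sim E \iff \rightb D=\rightb E$, and all three conditions become equivalent. So I would concentrate entirely on proving $D\sim E \iff \leftb D=\leftb E$.

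The forward implication is the easy half. Since $L$ has finite length and is semimodular it is graded, so every lattice isomorphism preserves height; and in a planar diagram the left boundary chain is exactly the chain obtained from $0$ by repeatedly passing to the leftmost upper cover. A similarity map preserves the relation ``being the leftmost upper cover'', so it identifies the left-of order on the covering pairs of $D$ with that of $E$. In particular the two diagrams determine the same leftmost-upper-cover chain issuing from $0$, and reading this off inside the common underlying lattice gives $\leftb D=\leftb E$.

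The substantive direction is the converse: assuming $\leftb D=\leftb E$, I must recover the entire left-of order on covering pairs and thereby exhibit a similarity. Here I would lean on the combinatorial backbone of slim, semimodular diagrams. Slimness forces $\Jir L$ to be a union of two chains, so every element has at most two upper and at most two lower covers, and the only freedom in a diagram is the left-of order at the ``forks'' where two covers meet. Semimodularity organizes the covering edges into \emph{trajectories}, that is, maximal chains of prime intervals linked through the $4$-cells of $L$; this linking relation is purely lattice-theoretic and hence is the same in $D$ as in $E$. Each trajectory meets the left boundary chain in exactly one edge, so prescribing $\leftb D=\leftb E$ fixes the initial edges of all trajectories; propagating along the trajectories should then force the left-of order at every fork to coincide in $D$ and $E$, so that the identity map is a similarity and $D\sim E$.

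The hard part will be precisely this reconstruction step: proving that the left-of order at each fork is determined by the left boundary chain together with the lattice structure. I expect to establish it by induction on $|L|$, deleting a doubly irreducible corner element lying on the boundary (such an element exists for any non-chain slim, semimodular $L$), verifying that the deletion restricts $\leftb D$ to the left boundary chain of the smaller lattice, invoking the inductive hypothesis, and finally showing that the reinserted corner admits only one possible side. The delicate point will be to control how the trajectory through the removed edge meets the left boundary in the claimed unique way; this is exactly where both slimness and semimodularity are indispensable.
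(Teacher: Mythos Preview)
The paper does not actually supply a proof of this lemma: it is stated as a known result, quoted as a particular case of \cite[Theorem 1-3.5]{ggwltsta} and alternatively extractable from \cite[Lemmas 6 and 7]{r:czg-sch-visual} together with \cite[Proof 4.7]{r:czg-sch-patch}. So there is no in-paper argument to compare against, and your proposal is effectively an attempt to reconstruct the cited proof from scratch.

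Your reduction via vertical reflection and your forward implication are both fine. For the converse your outline is broadly on the right track, but two points deserve attention. First, the assertion that in a slim semimodular lattice ``every element has at most two \emph{lower} covers'' is false in general; this holds only in the distributive case (indeed, the paper uses exactly this distinction in the proof of Proposition~\ref{lMadisjLLp}, where dual slimness is invoked to get the bound on lower covers). Your argument does not actually need the lower-cover bound, since the left-of order you must pin down lives on \emph{upper} covers, and slimness does give at most two of those; but you should remove the incorrect claim. Second, the heart of your plan --- that the trajectory structure together with $\leftb D$ reconstructs the full left-of order --- is plausible but only sketched. You correctly observe that in a slim lattice every covering square is a $4$-cell (no $M_3$), so trajectories are lattice-theoretic; and Lemma~\ref{traJlemMa} (itself imported from \cite{r:czg-sch-JH}) gives that each trajectory hits $\leftb D$ once. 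What still needs to be written out is why knowing \emph{which} edge of the common left boundary chain a trajectory begins at forces the left-of decision at every fork it subsequently passes through; your proposed induction by deleting a doubly irreducible boundary element is the standard device here, but you will have to check carefully that such a deletion preserves the equality $\leftb D=\leftb E$ on the smaller lattice and that the reinserted element has a forced side. None of this looks wrong, but it is the substantive content and is currently only promised, not delivered.
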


Next, with $D$ as above, consider the diagram $G$ of the (slim, distributive) lattice $\leftb D\times \rightb D$ such that $\leftb D\times\set 0\subseteq \leftb G$  and $\set0\times \rightb D\subseteq \rightb G$. Then $G$ is determined up to similarity by Lemma~\ref{simLmaPlD}, and it is called the \emph{grid diagram} associated with $D$; see Figure~\ref{fig:egy}. More generally, the diagram of the direct product of a chain 
$\{b_0\prec b_1\prec\cdots\prec b_m\}$ (to be placed on the bottom left boundary) and a chain 
$\{c_0\prec c_1\prec\cdots\prec c_n\}$ 
(to be placed at the bottom right boundary) is also called a \emph{grid diagram of type} $m\times n$. 
For $i\in\set{1,\ldots,m}$ and $j\in\set{1,\ldots,n}$, let
\[\ijcell i j= \set{(b_{i-1},c_{j-1}), (b_{i},c_{j-1}),(b_{i-1},c_{j}),(b_{i},c_{j})};
\]
this sublattice (and subdiagram) is called a \emph{$4$-cell}. The smallest join-congruence of $G$ that collapses the top boundary $\set{(b_{i},c_{j-1}),(b_{i-1},c_{j}),(b_{i},c_{j})}$ of this $4$-cell is denoted by $\cgjcon{\ijcell i j}$. We recall the following statement from  G.~Cz\'edli~\cite[Corollary 22]{r:czg-mtx};
for $(i,j)=(2,3)$ see the thick edges in Figure~\ref{fig:egy} for an illustration.

\begin{lemma}\label{jIncRckL} Let $G$ be a grid diagram of type $m\times n$, and let $i\in\set{1,\ldots,m}$ and $j\in\set{1,\ldots,n}$. Denote $\cgjcon{\ijcell i j}$ by $\balpha$.  Then
\begin{enumeratei}
\item\label{jIncRckLa} 
the $\balpha$-block $\blockk{(b_{i},c_{j})}\balpha$ of $(b_i,c_j)$ is $\set{(b_{i},c_{j-1}),(b_{i-1},c_{j}),(b_{i},c_{j})}$;
\item\label{jIncRckLb}
$\set{(b_s,c_{j-1}),(b_s,c_{j})}$ for $s>i$ and $\set{(b_{i-1},c_t),(b_{i},c_t)}$ for $t>j$ are the two-element blocks of $\balpha$;
\item\label{jIncRckLc} the rest of $\balpha$-blocks are singletons, and $\balpha$ is cover-preserving.
\end{enumeratei}    
\end{lemma}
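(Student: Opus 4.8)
The plan is to exhibit the intended partition explicitly and then identify it with $\balpha$ by a double inclusion. Let $\pi$ be the partition of $G$ whose non-singleton blocks are $B_0=\set{(b_i,c_{j-1}),(b_{i-1},c_j),(b_i,c_j)}$, the pairs $B_s=\set{(b_s,c_{j-1}),(b_s,c_j)}$ for $s>i$, and the pairs $B'_t=\set{(b_{i-1},c_t),(b_i,c_t)}$ for $t>j$, with all remaining elements forming singletons. These blocks are pairwise disjoint, since distinct families are separated by their $b$- or $c$-coordinates; hence, once $\balpha=\pi$ is proved, parts (i) and (ii) and the ``singletons'' clause of (iii) can be read off directly from this list.

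First I would show $\pi\subseteq\balpha$, i.e.\ that every non-singleton collapse of $\pi$ is forced in any join-congruence $\theta$ that collapses the top boundary of $\ijcell{i}{j}$. The three elements of $B_0$ are exactly this top boundary, so they lie in one $\theta$-block by hypothesis. Using join-compatibility, joining $(b_i,c_{j-1})\mathrel\theta(b_i,c_j)$ with $(b_s,c_0)$ gives $(b_s,c_{j-1})\mathrel\theta(b_s,c_j)$ for every $s\ge i$, which produces the blocks $B_s$ (for $s>i$); symmetrically, joining $(b_{i-1},c_j)\mathrel\theta(b_i,c_j)$ with $(b_0,c_t)$ gives $(b_{i-1},c_t)\mathrel\theta(b_i,c_t)$ for every $t\ge j$, producing the $B'_t$. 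Taking $\theta=\balpha$ yields $\pi\subseteq\balpha$.

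For the reverse inclusion it suffices to verify that $\pi$ is itself a join-congruence collapsing the top boundary; minimality of $\balpha$ then gives $\balpha\subseteq\pi$, and hence $\balpha=\pi$. Exploiting the product formula $(b_p,c_q)\vee(b_{p'},c_{q'})=(b_{\max(p,p')},c_{\max(q,q')})$, I would check, for each non-singleton block $B$ and each $z=(b_p,c_q)\in G$, that the joins $x\vee z$ with $x\in B$ all fall into a single block of $\pi$. For $B_0$ a short case analysis on the position of $(p,q)$ relative to $(i,j)$ shows the three joins to be either a common element, or the pair $B_p$ (when $p>i$), or the pair $B'_q$ (when $q>j$), or again all of $B_0$; the blocks $B_s$ and $B'_t$ are handled by the very same computation. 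Since $B_0$ contains the top boundary, this settles $\balpha=\pi$.

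It remains to prove that $\balpha$ is cover-preserving, which I expect to be the main obstacle. The grid $G$ is graded by $r(b_p,c_q)=p+q$, and each $\pi$-block has a greatest element, namely $(b_i,c_j)$, $(b_s,c_j)$, and $(b_i,c_t)$ for $B_0$, $B_s$, $B'_t$. Setting $\bar r(\blockk{x}{\balpha})=r(\max\blockk{x}{\balpha})$ yields a candidate grading of the quotient $G/\balpha$, and the goal is to show that blocks covering one another in $G/\balpha$ differ in $\bar r$ by exactly $1$ and are linked by a covering pair of $G$. The difficulty is that the blocks are not intervals — $B_0$ has two minimal elements, so collapsing it contracts the two upper edges of the $4$-cell that meet at $(b_i,c_j)$ — so one cannot simply invoke ``contraction of a length-one interval.'' Instead I would trace the lower and upper covers of each block top through the explicit list of blocks and confirm a rank jump of exactly $1$ in every case, which establishes cover-preservation.
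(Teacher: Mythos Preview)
Your identification of $\balpha$ with the explicit partition $\pi$ via double inclusion is correct, and the join-compatibility check in the third paragraph is clean. Note, however, that the paper does not prove this lemma at all: it simply imports it from \cite{r:czg-mtx} as Corollary~22 there, so there is no in-paper argument to compare against for parts~(i)--(ii) and the ``singletons'' clause of~(iii).

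The cover-preserving argument, on the other hand, has a genuine gap in the form you propose. Your candidate grading $\bar r\bigl(\blockk x\balpha\bigr)=r\bigl(\max\blockk x\balpha\bigr)$ is \emph{not} a rank function on the quotient. Concretely, the singleton $\bigl\{(b_{i-1},c_{j-1})\bigr\}$ is covered by $B_0$ in $G/\balpha$ (one checks directly that no block sits strictly between them), yet $\bar r$ assigns them the values $i+j-2$ and $i+j$, a jump of~$2$. Thus ``confirming a rank jump of exactly $1$ in every case'' will fail precisely at the bottom of the source cell, and tracing only the covers of the block \emph{tops} misses this cover entirely (both lower covers of $\max B_0$ lie inside $B_0$). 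The fix is to use the minimum rather than the maximum: every $\pi$-block has all its minimal elements at the same $r$-level, and $\underline r\bigl(\blockk x\balpha\bigr)=\min\{r(y):y\in\blockk x\balpha\}$ \emph{is} a grading of $G/\balpha$. With this corrected $\underline r$, one verifies for each non-collapsed cover $a\prec b$ of $G$ that $\underline r([b])-\underline r([a])=1$, which together with $[a]<[b]$ and the existence of the grading yields $[a]\prec[b]$. Alternatively, you can bypass rank functions altogether and check directly, for each of the finitely many shapes of non-collapsed covers $a\prec b$, that no block lies strictly between $[a]$ and $[b]$; the case analysis is short because the non-singleton blocks are confined to two lines of the grid.
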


The following description of join of join-congruences is borrowed from G.~Cz\'edli and E.\,T.~Schmidt~\cite[Lemma 11]{r:czg-sch-survey}.

\begin{lemma}\label{LMdesCrjniN}
Let $\bbeta_i$, $i\in I$, be join-congruences of a join-semilattice, and let $ u, v\in G$. Then $( u, v)\in\bigvee_{i\in I}\bbeta_i$ if{}f  there is a $k\in\mathbb N_0=\set{0,1,2,\ldots}$ and  there are elements 
\[ u= z_0\leq z_1\leq \cdots \leq  z_k= w_k\geq  w_{k-1}\geq\cdots\geq  w_0= v\]
such that $\set{( z_{j-1}, z_j),( w_{j-1}, w_j)}\subseteq \bigcup_{i\in I}\bbeta_i$ for $j\in\set{1,\ldots,k}$.
\end{lemma}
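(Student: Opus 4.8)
The plan is to show that the relation $\rho$ on $G$ defined by the right-hand side --- declare $(u,v)\in\rho$ if{}f there exists a ``mountain'' $u=z_0\leq\cdots\leq z_k=w_k\geq\cdots\geq w_0=v$ all of whose steps lie in $\bigcup_{i\in I}\bbeta_i$ --- is precisely the join $\bigvee_{i\in I}\bbeta_i$ in the lattice of join-congruences of $G$. Proving the equality $\rho=\bigvee_{i\in I}\bbeta_i$ yields the lemma, and it splits into two inclusions, one routine and one carrying the real content.

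For the easy inclusion, let $\gamma$ be any join-congruence with $\bbeta_i\subseteq\gamma$ for all $i\in I$. If $(u,v)\in\rho$, then every step of a witnessing mountain lies in $\bigcup_{i\in I}\bbeta_i\subseteq\gamma$; reading the mountain from $u$ up to the peak and back down to $v$ and using transitivity of $\gamma$ gives $(u,v)\in\gamma$. Hence $\rho\subseteq\gamma$, and taking $\gamma=\bigvee_{i\in I}\bbeta_i$ settles the ``if{}f'' implication from the mountain condition to membership in the join, together with one half of the claimed equality.

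For the reverse inclusion I would verify that $\rho$ is itself a join-congruence containing every $\bbeta_i$, whence $\bigvee_{i\in I}\bbeta_i\subseteq\rho$ by minimality of the join. That $\bbeta_i\subseteq\rho$ is witnessed by the length-one mountain $u\leq u\vee v\geq v$, whose two steps lie in $\bbeta_i$ because $\bbeta_i$ is compatible with $\vee$. Reflexivity of $\rho$ comes from $k=0$ and symmetry from reversing a mountain, both immediate. The substitution rule, that $(u,v)\in\rho$ implies $(u\vee c,\,v\vee c)\in\rho$ for every $c\in G$, follows by joining an entire witnessing mountain with $c$, since each step $(x,y)\in\bbeta_i$ gives $(x\vee c,\,y\vee c)\in\bbeta_i\subseteq\bigcup_{i\in I}\bbeta_i$; once transitivity is available, full compatibility of $\rho$ with $\vee$ follows from the substitution rule in the standard two-step way.

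The decisive step --- and the only place where the join-semilattice structure is genuinely used --- is transitivity of $\rho$. Given a mountain from $u$ to $v$ with peak $p$ (so $v\leq p$) and a mountain from $v$ to $t$ with peak $q$ (so $v\leq q$), I would assemble a single mountain from $u$ to $t$ with peak $p\vee q$. On the way up, follow the first up-chain from $u$ to $p$, then join the second up-chain with $p$; since $v\leq p$ this slides it from $v\vee p=p$ up to $q\vee p=p\vee q$. On the way down, join the first down-chain with $q$; since $v\leq q$ this slides $p\vee q$ down to $v\vee q=q$, from where the second down-chain descends to $t$. Compatibility of each $\bbeta_i$ with $\vee$ keeps every translated step inside $\bigcup_{i\in I}\bbeta_i$, so the concatenation is a legitimate witness and $(u,t)\in\rho$. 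I expect this peak-merging construction to be the main obstacle, as it is exactly what lets the rigid mountain shape replace the arbitrary zig-zags needed for congruence joins in a general algebra; the rest of the argument is then bookkeeping that delivers $\rho=\bigvee_{i\in I}\bbeta_i$.
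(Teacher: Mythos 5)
The paper does not prove this lemma at all: it is quoted verbatim from G.~Cz\'edli and E.\,T.~Schmidt~\cite[Lemma 11]{r:czg-sch-survey}, so there is no in-paper argument to compare yours against. Your proof is correct and complete: the two inclusions are set up properly, the length-one mountain $u\leq u\vee v\geq v$ correctly witnesses $\bbeta_i\subseteq\rho$, and the peak-merging argument for transitivity (translating the second up-chain by $\vee p$ and the first down-chain by $\vee q$ to meet at $p\vee q$) is exactly the point where the join-semilattice structure lets the rigid ``mountain'' shape replace arbitrary zig-zags; the only cosmetic detail worth noting is that the two halves of the concatenated mountain both have length $k+l$, so no padding by trivial steps is even needed.
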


Combining Lemmas~\ref{jIncRckL} and  \ref{LMdesCrjniN} we easily obtain the following corollary, which is implicit in  G.~Cz\'edli~\cite{r:czg-mtx}.

\begin{corollary}\label{hTdszkeCcj} Let $G$ be a grid diagram of type $m\times n$,  and let $k\in\mathbb N=\set{1,2,\ldots}$. Assume that $1\leq i_1<\cdots <i_k\leq m$ and that $j_1,\ldots, j_k$ are pairwise distinct elements of $\set{1,\ldots n}$. Consider the join-congruence
$\bbeta=\bigvee_{s=1}^k \cgjcon{\ijcell {i_s}{j_s}}$. Then $\bbeta$ is cover-preserving, and it is described by the following rules.
\begin{enumeratei}
\item\label{hTdszkeCcja} $\bigl( (b_i,c_j),(b_s,c_t)\bigr)\in\bbeta$ if{f} $\set{(b_i,c_j),(b_s,c_t)}\subseteq \blockk{(b_i\vee b_s,c_j\vee c_t)}{\bbeta}$;
\item\label{hTdszkeCcjb} for $0\leq r<s$, $\bigl((b_r,c_t), (b_s,c_t)\bigr)\in\bbeta$ if{f} for each $x\in\set{r+1,\ldots,s}$ there is a $($unique$)$ $p\in\set{1,\ldots,k}$ such that  $x=i_p$ and $j_p\leq t$;
\item\label{hTdszkeCcjc} for $0\leq r<s$, $\bigl((b_t,c_r), (b_t,c_s)\bigr)\in\bbeta$ if{f} for each $x\in\set{r+1,\ldots,s}$ there is a $($unique$)$ $p\in\set{1,\ldots,k}$ such that  $x=j_p$ and $i_p\leq t$.
\end{enumeratei}
\end{corollary}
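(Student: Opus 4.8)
The plan is to deduce Corollary~\ref{hTdszkeCcj} from Lemmas~\ref{jIncRckL} and \ref{LMdesCrjniN} by a careful but essentially bookkeeping argument. First I would set $\bbeta=\bigvee_{s=1}^k\cgjcon{\ijcell{i_s}{j_s}}$ and observe that the cover-preserving claim, together with part~\eqref{hTdszkeCcja}, will follow once parts~\eqref{hTdszkeCcjb} and \eqref{hTdszkeCcjc} are established: indeed, \eqref{hTdszkeCcja} simply says that the $\bbeta$-block of any grid point $(b_i,c_j)$ has a top element, namely the join $(b_i\vee b_s, c_j\vee c_t)$ over the block, and that the block is the full preimage of that top; this is the hallmark of a cover-preserving join-congruence on a direct product of two chains, where each block is a ``rectangle minus interior'' glued along the top edges. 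So the real content is to identify exactly which vertical pairs $\bigl((b_r,c_t),(b_s,c_t)\bigr)$ and which horizontal pairs $\bigl((b_t,c_r),(b_t,c_s)\bigr)$ are collapsed.

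I would prove \eqref{hTdszkeCcjb} directly, the horizontal case \eqref{hTdszkeCcjc} being symmetric under interchanging the roles of the two chains (and of the $i_p$'s and $j_p$'s). For the ``if'' direction, suppose that for each $x\in\set{r+1,\ldots,s}$ there is $p$ with $x=i_p$ and $j_p\leq t$. Then by Lemma~\ref{jIncRckL}\eqref{jIncRckLb}, each single covering step $\bigl((b_{x-1},c_t),(b_x,c_t)\bigr)$ with $t>j_p=j$ is a two-element block of the individual congruence $\cgjcon{\ijcell{i_p}{j_p}}=\cgjcon{\ijcell{x}{j_p}}$, hence lies in $\bigcup_s\cgjcon{\ijcell{i_s}{j_s}}$; chaining these covers from $b_r$ up to $b_s$ at the fixed right-coordinate $c_t$ gives $\bigl((b_r,c_t),(b_s,c_t)\bigr)\in\bbeta$ by Lemma~\ref{LMdesCrjniN} (take the monotone path $z_0\le\cdots\le z_k$ to run straight up and the $w$-path to be constant). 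For the ``only if'' direction I would invoke Lemma~\ref{LMdesCrjniN} to obtain a zig-zag of $\bigcup_s\cgjcon{\ijcell{i_s}{j_s}}$-steps joining $(b_r,c_t)$ to $(b_s,c_t)$, and then track the first coordinate: each elementary step belongs to some $\cgjcon{\ijcell{i_p}{j_p}}$, whose non-singleton blocks are described completely by Lemma~\ref{jIncRckL}\eqref{jIncRckLa}--\eqref{jIncRckLb}, so a step changes the $b$-coordinate from $b_{x-1}$ to $b_x$ only when $x=i_p$ and the $c$-coordinate is strictly above $c_{j_p}$, i.e.\ $j_p\le t$. Since the net effect must carry $b_r$ up to $b_s$, every intermediate index $x\in\set{r+1,\ldots,s}$ must be realized as some $i_p$ with $j_p\le t$; the hypothesis $1\le i_1<\cdots<i_k\le m$ guarantees the index $p$ is unique.

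The main obstacle I anticipate is the ``only if'' direction, because Lemma~\ref{LMdesCrjniN} only furnishes \emph{some} monotone zig-zag $z_0\le\cdots\le z_k=w_k\ge\cdots\ge w_0$ whose consecutive differences lie in $\bigcup_s\cgjcon{\ijcell{i_s}{j_s}}$, and this path need not stay on the line of constant right-coordinate $c_t$; it may wander through the grid and return. The care is in showing that, projecting onto the first coordinate and using that each generating congruence changes the first coordinate only across a prescribed covering and only above a prescribed height, the existence of \emph{any} such path still forces every $b$-coordinate in $\set{b_{r+1},\ldots,b_s}$ to be ``crossable'' at height $\ge c_t$, independent of the particular route. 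Concretely, I would argue that the set of first coordinates reachable from $(b_r,c_t)$ by steps staying at height $\ge c_t$ is an up-set/interval in the $b$-chain determined precisely by $\set{i_p: j_p\le t}$, so reaching $b_s$ forces the stated condition. Once both directions of \eqref{hTdszkeCcjb} and \eqref{hTdszkeCcjc} are in hand, \eqref{hTdszkeCcja} and cover-preservation follow formally, since every block is then seen to be the interval $[(b_i,c_j),(b_i\vee b_s,c_j\vee c_t)]$ cut out by these vertical and horizontal collapses.
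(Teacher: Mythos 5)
Your overall strategy is the one the paper intends: the paper gives no proof of Corollary~\ref{hTdszkeCcj}, saying only that it is obtained by combining Lemmas~\ref{jIncRckL} and \ref{LMdesCrjniN}, and your ``if'' direction (chaining unit collapses along the line of constant second coordinate) is correct, up to the small slip that for $t=j_p$ the pair $\bigl((b_{x-1},c_{j_p}),(b_x,c_{j_p})\bigr)$ lies inside the three-element block of Lemma~\ref{jIncRckL}\eqref{jIncRckLa} rather than forming a two-element block; the vertical edges collapsed by $\cgjcon{\ijcell{x}{j_p}}$ occur for all $t\ge j_p$, not only for $t>j_p$.

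The genuine gap is in the ``only if'' direction, exactly where you anticipated trouble. Lemma~\ref{LMdesCrjniN} gives a monotone staircase from $(b_r,c_t)$ up to some $z_k=w_k$ and another from $(b_s,c_t)$ up to the same element, each nontrivial unit step being either a first-coordinate cover (possible across row $x$ at height $c_{t'}$ only if $x=i_p$ and $j_p\le t'$) or a second-coordinate cover. Tracking the first coordinate only shows that every $x\in\set{r+1,\ldots,s}$ equals some $i_p$ with $j_p$ bounded by \emph{the height at which the staircase happens to cross row $x$}, and that height may be strictly larger than $t$; so this does not yield $j_p\le t$, which is what the corollary asserts. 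Your proposed repair --- that the first coordinates reachable at height $\ge c_t$ are determined by $\set{i_p:j_p\le t}$ --- is the statement to be proved rather than an argument for it, since a source cell with $j_p>t$ does make row $i_p$ crossable at heights $\ge c_{j_p}$, which are still $\ge c_t$. The missing ingredient is the pairwise distinctness of $j_1,\ldots,j_k$: if the staircase crossed row $x=i_p$ at height $h$ with $j_p>t$ (so $h\ge j_p>t$), it would first have to make the up-step from $c_{j_p-1}$ to $c_{j_p}$ at some first coordinate $b_{x'}$ with $x'\le i_p-1$, and by Lemma~\ref{jIncRckL} this requires a source cell $\ijcell{i_q}{j_q}$ with $j_q=j_p$ and $i_q\le i_p-1$; distinctness of the $j$'s forces $q=p$, a contradiction. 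With this inserted your argument closes (and deriving \eqref{hTdszkeCcja} and cover-preservation from \eqref{hTdszkeCcjb}--\eqref{hTdszkeCcjc} is then routine). An alternative that avoids path-tracking entirely is to verify directly that the relation described by (i)--(iii) is a join-congruence containing every generator, and is contained in $\bbeta$ by your ``if'' argument.
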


In Figure~\ref{fig:egy}, this statement is illustrated for $m=n=4$ and $k=4$ so that the non-singleton blocks of $\bbeta$ are indicated by dotted lines and  the $\ijcell {i_s}{j_s}$, $1\leq s\leq 4$, are the grey cells, that is,  
\begin{equation}\label{mTapermI}
\begin{pmatrix}i_1 &\dots& i_4\cr
j_1 &\dots& j_4
\end{pmatrix}= \begin{pmatrix}1&2&3&4\cr
4&3&1&2
\end{pmatrix}\text.
\end{equation}
Corollary~\ref{hTdszkeCcj} is also illustrated by Figure~\ref{fig:ket} for $m=n=8$ and $k=4$ where
\begin{equation}\label{pRtLmtShB}
\begin{pmatrix}i_1 &\dots& i_4\cr
j_1 &\dots& j_4
\end{pmatrix}= \begin{pmatrix}4&5&7&8\cr
4&8&1&2
\end{pmatrix}
\end{equation}
(only the dark grey $4$-cells are considered, the light grey one should be disregarded).

Now, we consider the grid diagram $G$ (of type $h\times h$) associated with $D$ again. The map $\phi\colon G\to D$, $(x,y)\mapsto x\vee y$ is a surjective  join-homomorphism. By G.~Cz\'edli and E.\,T.~Schmidt~\cite[proof of Corollary 2]{r:czg-sch-howtoderive}, $\phi$ is \emph{cover-preserving}, that is, if $ a, b\in G$ and $ a\preceq  b$, then $\phi( a)\preceq\phi( b)$.
Thus its kernel, $\balpha=\{( a, b): \phi( a)=\phi( b)\}$ is a so-called \emph{cover-preserving join-congruence} by definition, see \cite{r:czg-sch-howtoderive}. 
 If the $\balpha$-block 
$\blockk{(b_i,c_j)}{\balpha}$ includes $\set{(b_{i},c_{j-1}),(b_{i-1},c_{j})}$ but
$(b_{i-1},c_{j-1})\notin \blockk{(b_i,c_j)}{\balpha}$,  then $\ijcell i j$ is called a \emph{source cell} of $\balpha$. 
In Figure~\ref{fig:egy}, the source cells of $\balpha=\Ker\phi$ are the grey ones.
The set of these source cells is denoted by $\scells\balpha$. 
With $D$, we associate a relation $\pi(D)$ (which turns out to be a permutation, see \eqref{mTapermI} for Figure~\ref{fig:egy})  as follows:
\begin{equation}\label{fkintropi}\pi(D)=\bigset{(i,j)\in \set{1,\ldots,h}^2: \ijcell i j\in \scells\balpha} \text.
\end{equation}
Remember that similar diagrams are considered equal. The following result can be extracted from 
 G. Cz\'edli and E.\,T. Schmidt~\cite{czgschperm} and Lemma~\ref{LMdesCrjniN}. (Some parts that are formulated in \cite{czgschperm} for lattices rather than diagrams are explicitly given in  G.~Cz\'edli and G.~Gr\"atzer~\cite{ggwltsta}.) 

\begin{lemma}\label{lMsckLs} Let $D$ be a slim, semimodular, planar diagram of length $h$, and let $G$,  $\phi\colon G\to D$,  $\balpha=\Ker\phi$, and $\pi=\pi(D)$  be as above.
\begin{enumeratei}
\item\label{lMsckLsa} $\pi$ is a permutation on $\set{1,\ldots,h}$.
\item\label{lMsckLsb} $\balpha=\bigvee_{i=1}^h  \cgjcon{\ijcell i{\pi(i)}}$. 
\item\label{lMsckLsc} The mapping $D\mapsto \pi(D)$ is a bijection from the set of slim, semimodular diagrams of length $h$ to the set $S_h$ of permutations acting on $\set{1,\ldots,h}$.
\end{enumeratei}
\end{lemma}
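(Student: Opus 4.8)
The plan is to route everything through two \emph{threshold functions} attached to $\balpha=\Ker\phi$ and then to identify $\pi=\pi(D)$ with a permutation built from them. For a row $i\in\set{1,\ldots,h}$, the vertical cover $(b_{i-1},c_t)\prec(b_i,c_t)$ of $G$ is collapsed by $\balpha$ exactly when $b_i\leq b_{i-1}\vee c_t$; since $b_{i-1}\vee c_t$ is isotone in $t$, fails this at $t=0$ (as $b_{i-1}\prec b_i$) and satisfies it at $t=h$ (as $b_{i-1}\vee c_h=1$), there is a unique threshold $t_i\in\set{1,\ldots,h}$ such that this cover is collapsed iff $t\geq t_i$. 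Dually, for a column $j$ the horizontal cover $(b_s,c_{j-1})\prec(b_s,c_j)$ is collapsed iff $c_j\leq b_s\vee c_{j-1}$, giving a threshold $s_j\in\set{1,\ldots,h}$. The first concrete step is the purely order-theoretic identity
\[
\ijcell i j\in\scells\balpha \iff (t_i=j \text{ and } s_j=i),
\]
which I would verify straight from the definition of a source cell: both top covers of $\ijcell i j$ are collapsed iff $j\geq t_i$ and $i\geq s_j$, while $j>t_i$ would drag $(b_{i-1},c_{j-1})$ into the block of $(b_i,c_j)$ (and symmetrically for $i>s_j$), violating the source condition; conversely $j=t_i$ forces $b_i\not\leq b_{i-1}\vee c_{j-1}$, so the bottom corner $(b_{i-1},c_{j-1})$ stays out of that block. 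This step uses only that $\phi$ is a cover-preserving join-homomorphism.

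Next I would prove part~(i): the maps $i\mapsto t_i$ and $j\mapsto s_j$ are mutually inverse permutations of $\set{1,\ldots,h}$, whence $\scells\balpha=\set{(i,t_i):i}=\set{(s_j,j):j}$ meets each row and each column exactly once and $\pi=\pi(D)$ is the graph of the permutation $i\mapsto t_i$. The heart of the matter is injectivity---that no two rows share a threshold column---and this is the one point where semimodularity and slimness of $L$ are genuinely used: two equal thresholds would create a collapsed configuration incompatible with $\phi$ being length-preserving along maximal chains of the slim, semimodular $L$. I expect this to be the main obstacle, and I would discharge it by invoking the structural correspondence of G.~Cz\'edli and E.\,T.~Schmidt~\cite{czgschperm} (alternatively, by counting collapsed covers against $\length L=h$ through the planar Euler relation), after which $t$ is forced to be a bijection with inverse $s$.

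For part~(ii), set $\bbeta=\bigvee_{i=1}^h\cgjcon{\ijcell i{\pi(i)}}$. Since $\pi$ is a permutation, the generating cells have strictly increasing rows $1<\cdots<h$ and pairwise distinct columns $\pi(1),\ldots,\pi(h)$, so Corollary~\ref{hTdszkeCcj} applies and describes $\bbeta$ explicitly: its collapsed vertical covers in row $i$ are those at columns $t\geq\pi(i)$, and its collapsed horizontal covers in column $j$ are those at rows $t\geq\pi^{-1}(j)$. Comparing with the thresholds of the previous paragraph, namely $t_i=\pi(i)$ and $s_j=\pi^{-1}(j)$, shows $\bbeta$ and $\balpha$ collapse exactly the same covering pairs. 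Because a cover-preserving join-congruence is determined by the covering pairs it collapses---any collapse can be reconstructed by Lemma~\ref{LMdesCrjniN} through collapsed covers---I conclude $\balpha=\bbeta$. The inclusion $\bbeta\leq\balpha$ is in any case immediate, each source cell being collapsed on top by $\balpha$ and $\cgjcon{\cdot}$ being the least join-congruence doing so.

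Finally, for part~(iii), injectivity of $D\mapsto\pi(D)$ follows since, if $\pi(D_1)=\pi(D_2)$, transporting along the essentially unique similarity between their $h\times h$ grids makes part~(ii) give equal kernels, hence $D_1\cong G/\balpha\cong D_2$ as diagrams, and Lemma~\ref{simLmaPlD} upgrades this to similarity, i.e.\ to equality of diagrams. For surjectivity, given $\pi\in S_h$, form $\bbeta$ on the $h\times h$ grid as above; by Corollary~\ref{hTdszkeCcj} it is a cover-preserving join-congruence, and by \cite{czgschperm} the quotient $G/\bbeta$ is a slim, semimodular lattice of length $h$ with a planar diagram $D$ whose join-map $\phi$ has kernel $\bbeta$; recomputing thresholds exactly as before yields $\scells\bbeta=\set{(i,\pi(i)):i}$, that is $\pi(D)=\pi$. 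In short, the combinatorial glue---the source-cell characterization, the join formula via Corollary~\ref{hTdszkeCcj}, and the bijection---I can supply directly; the single genuinely hard input is the permutation property underlying part~(i), for which I lean on the slim, semimodular structure theory of~\cite{czgschperm}.
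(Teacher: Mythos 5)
Your attempt cannot be measured against a proof in the paper, because the paper gives none: Lemma~\ref{lMsckLs} is recalled from \cite{czgschperm} (with the diagram-level formulations credited to \cite{ggwltsta}), so the real comparison is between your partial reconstruction and that citation. What you supply yourself is correct: the threshold description of $\balpha=\Ker\phi$ (the cover $(b_{i-1},c_t)\prec(b_i,c_t)$ is collapsed iff $b_i\leq b_{i-1}\vee c_t$, a condition monotone in $t$), the characterization of source cells as the pairs where both thresholds are attained exactly, the fact that a join-congruence of a finite lattice is determined by the covering pairs it collapses (its blocks are convex, so every collapsed pair refines into collapsed covers), and the matching of these covers against Corollary~\ref{hTdszkeCcj} to obtain part (ii). Two comments. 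First, the step you flag as the one genuinely hard input---that $i\mapsto t_i$ and $j\mapsto s_j$ are mutually inverse bijections---does not need to be outsourced: if $t_i=j$, then semimodularity gives that $b_{i-1}\vee c_{j-1}$ is covered by both $b_i\vee c_{j-1}$ and $b_{i-1}\vee c_j$ (equality in either place would yield $b_i\leq b_{i-1}\vee c_{j-1}$, contradicting the minimality of $t_i$), while $b_i\vee c_{j-1}\leq b_i\vee c_j=b_{i-1}\vee c_j$; since nothing lies strictly between an element and its cover, $b_i\vee c_{j-1}=b_{i-1}\vee c_j$, whence $c_j\leq b_i\vee c_{j-1}$ and $s_j=i$, and symmetrically $s_j=i$ forces $t_i=j$. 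This makes part (i) self-contained and renders your Euler-relation fallback unnecessary. Second, the surjectivity half of part (iii) is the one place where the citation is genuinely load-bearing: one must know that $G/\bbeta$ is again a slim, semimodular lattice of length $h$ whose planar diagram has $G$ as its grid and $\bbeta$ as the kernel of its join map; that is precisely what \cite{czgschperm} supplies, and it is the same debt the paper itself incurs by stating the lemma without proof.
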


\subsection*{Permutations determine the size}
For a permutation $\sigma\in S_h$, the number $|\{(\sigma(i),\sigma(j)): i<j$  and $\sigma(i)>\sigma(j)\}|$ of inversions of $\sigma$ is denoted by $\inv\sigma$. The same notation applies for \emph{partial permutations} (that is, bijections between two subsets of $\set{1,\ldots,h}$), only we have to stipulate that both $\sigma(i)$ and $\sigma(j)$ should be defined. For example, if $\sigma$ is the partial permutation given in 
\eqref{pRtLmtShB}, then $\inv\sigma=4$. 
The size $|D|$ of a diagram $D$ is the number of elements of the lattice it determines. 
A crucial step of the paper is represented by the following statement.

\begin{proposition}\label{crLLtThma} With the assumptions of Corollary~\ref{hTdszkeCcj}, let $K$ be the lattice determined by $G$, and let $\tau$ denote the partial permutation
$\begin{pmatrix}i_1 &\dots& i_k\cr
j_1 &\dots& j_k
\end{pmatrix}$. Then 
\begin{equation}\label{prizTfGx}
|K/\bbeta|= (m+1)(n+1)+\inv\tau -k(m+n+2)+\sum_{s=1}^k(i_s+j_s)\text.
\end{equation}
\end{proposition}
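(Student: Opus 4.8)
The plan is to compute $|K/\bbeta|$ as $|K| = (m+1)(n+1)$ minus the number $R$ of elements of $K$ that are \emph{not} the largest element of their own $\bbeta$-block. Since $\bbeta$ is a join-congruence, every block is closed under join and so has a unique top; furthermore, an element $x$ fails to be the top of its block exactly when one of its upper covers is $\bbeta$-equivalent to $x$. (If $x\mathrel{\bbeta} z$ with $z>x$, pick a cover $x\prec y\leq z$; joining $x\mathrel{\bbeta}z$ with $y$ gives $y\mathrel{\bbeta}z\mathrel{\bbeta}x$, so that equivalent upper cover $y$ exists.) Thus $|K/\bbeta|=(m+1)(n+1)-R$, and the whole problem reduces to evaluating $R$.

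Next I would reduce $R$ to a count of \emph{collapsed} covering edges, where I call an edge collapsed if its two endpoints are $\bbeta$-equivalent. In the grid $G$ every element $(b_i,c_j)$ has at most two upper covers, $(b_{i+1},c_j)$ and $(b_i,c_{j+1})$. Let $E$ be the number of collapsed covering edges and $T$ the number of elements both of whose upper covers exist and are collapsed. Summing over all elements the number of collapsed upper covers counts each non-top element once if it has exactly one collapsed upper cover and twice if it has two, so $E=R+T$, i.e.\ $R=E-T$.

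The quantity $E$ is read off directly from Corollary~\ref{hTdszkeCcj}\eqref{hTdszkeCcjb},\eqref{hTdszkeCcjc}, specialized to covering edges (the case $s=r+1$). The edge $(b_r,c_t)\prec(b_{r+1},c_t)$ is collapsed iff $r+1=i_p$ for some source index $p$ and $t\geq j_p$; for a fixed source $p$ this yields $n+1-j_p$ collapsed edges in the $b$-direction, and symmetrically $m+1-i_p$ collapsed edges in the $c$-direction. Summing over $p$ gives $E=\sum_{p=1}^k\bigl[(m+1-i_p)+(n+1-j_p)\bigr]=k(m+n+2)-\sum_{p=1}^k(i_p+j_p)$.

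The heart of the argument, and the step I expect to be the main obstacle, is to show $T=\inv\tau$. Combining the two edge criteria above, an element $(b_p,c_q)$ has both upper covers collapsed iff there are sources $s,t$ with $i_s=p+1$, $q\geq j_s$, $j_t=q+1$, and $p\geq i_t$. The first and third equalities force $i_t<i_s$ and $j_s<j_t$, so the pair $(t,s)$ is an inversion of $\tau$; conversely every inversion $(t,s)$ (with $i_t<i_s$ and $j_t>j_s$) produces exactly the element $(b_{i_s-1},c_{j_t-1})$, and the two remaining inequalities $q\geq j_s$, $p\geq i_t$ then hold automatically. Since the element determines $i_s=p+1$ and $j_t=q+1$, hence both sources uniquely, this is a bijection between doubly-collapsed elements and inversions of $\tau$, giving $T=\inv\tau$. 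Plugging $E$ and $T$ into $|K/\bbeta|=(m+1)(n+1)-E+T$ yields \eqref{prizTfGx}. The only care needed is bookkeeping: boundary elements (those with a single upper cover) can never be doubly collapsed, so they do not disturb the count of $T$, and one must check that the correspondence with inversions is genuinely one-to-one.
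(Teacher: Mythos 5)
Your proof is correct, but it takes a genuinely different route from the paper's. The paper argues by induction on $k$: it removes the source cell $\ijcell{i_1}{j_1}$, applies the induction hypothesis to the join-congruence $\bbeta'$ generated by the remaining $k-1$ cells, and then counts how many $\bbeta'$-blocks are merged when $\cgjcon{\ijcell {i_1}{j_1}}$ is joined back in --- namely $(m+1-i_1-q)+(n+1-j_1)$ of them, where $q=\inv\tau-\inv\sigma$ is the number of inversions involving $j_1$; this step requires the auxiliary congruences $\bgamma_u,\bgamma_v$ and the observation that the relevant blocks are ``positioned in parallel.'' You instead perform a single global double count: blocks of a join-congruence on a finite lattice have unique maxima, an element fails to be a maximum of its block iff one of its upper covers is collapsed onto it, hence $|K/\bbeta|=(m+1)(n+1)-E+T$, where $E$ (collapsed cover edges) and $T$ (elements with both upper covers collapsed) are read off from Corollary~\ref{hTdszkeCcj}\eqref{hTdszkeCcjb},\eqref{hTdszkeCcjc}. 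Your computation $E=k(m+n+2)-\sum_s(i_s+j_s)$ and the bijection $(t,s)\mapsto(b_{i_s-1},c_{j_t-1})$ between inversions of $\tau$ and doubly collapsed elements (giving $T=\inv\tau$) both check out, including the boundary and uniqueness caveats you flag. Your version avoids the induction and the delicate interaction analysis of the paper at the modest cost of the top-of-block characterization; both arguments ultimately rest on the same structural input, Corollary~\ref{hTdszkeCcj}.
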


\begin{proof}
We prove \eqref{prizTfGx} by induction on $k$. The case $k=0$ is obvious since then $\bbeta$ is the least join-congruence, $\inv\tau=0$, and $K/\bbeta\cong K$. Hence we assume that $k>0$ and the lemma holds for all smaller values. 
We let 
\[\sigma = \begin{pmatrix}
i_2&\ldots&i_k\cr
j_2&\ldots&j_k
\end{pmatrix}\text.
\]   
The situation is depicted in Figure~\ref{fig:ket}, where $k=5$,  
\[\tau=\begin{pmatrix}
2&4&5&7&8\cr
5&4&8&1&2
\end{pmatrix}
\]  is given by the four dark grey $4$-cells plus the light grey $4$-cell $\ijcell 25$, and  $\sigma$ is given by the four the dark grey $4$-cells. (Note that $\sigma$ is the partial permutation 
in \eqref{pRtLmtShB} but now  the subscripts are shifted by 1.) Let 
\[\bbeta'=\bigvee_{s=2}^k  \cgjcon{\ijcell {i_s}{j_s}} = \bigvee_{s=2}^k  \cgjcon{\ijcell {i_s}{\sigma(i_s)}}\text;
\]
its blocks are indicated by dotted lines in Figure~\ref{fig:ket}.
By the induction hypothesis, the number of $\bbeta'$-blocks is
\begin{equation}\label{prikmineyTfy}
|K/\bbeta'|= (m+1)(n+1)+\inv\sigma -(k-1)(m+n+2)+\sum_{s=2}^k(i_s+j_s)\text.
\end{equation}
Consider the following elements (see them enlarged in Figure~\ref{fig:ket}):
\[\begin{matrix}
u=(b_{i_1},c_{j_1-1}),&v=(b_{i_1},c_{j_1}),&  w=(b_{i_1-1},c_{j_1}),& z=(b_{i_1},c_0),\cr
u'=(b_{m},c_{j_1-1}),& v'=(b_{m},c_{j_1}),&
v''=(b_{i_1},c_{n}),& w''=(b_{i_1-1},c_{n})\text.
\end{matrix}
\]
The restriction of $\bbeta'$ to an interval $I$ will be denoted by $\restrict {\bbeta'\rspace}I$, and $\bomega_I$ stands for the smallest equivalence on $I$. 
Since no dark grey $4$-cell occurs in the interval $[0,v'']$, Corollary~\ref{hTdszkeCcj} gives that $\restrict{\bbeta'\rspace}{[0,v'']}=\bomega_{[0,v'']}$. 
Similarly,  there is no $t$ such that  $\bigl((b_t, c_{j_1-1}), (b_t, c_{j_1})\bigr)\in \bbeta'$. Let $\bgamma_u$ be the join-congruence of $[z,u']$ defined by
\begin{equation}\label{gMmau}\bgamma_u=
\bigvee\bigset{ \cgjcon{\ijcell {i_s}{j_s}}: 1<s\leq k,\,\, j_s<j_1 } ; 
\end{equation}
it is the smallest join-congruence of $[z,u']$ that collapses the top boundaries of the dark grey $4$-cells in $[z,u']$. 
We conclude that $\bdelta=\bomega_{[0,v'']}\cup\bomega_{[u,v']}\cup \bgamma_u \cup [v,1]^2 $, which is clearly a join-congruence of $K$, 
 includes $\bbeta'$. Thus $\bgamma_u=\restrict{\bbeta'\rspace}{[z,u']}$. If \eqref{gMmau} is understood in the interval $[z,v']$, then it defines a join-congruence $\bgamma_v$ of $[z,v']$, and we similarly obtain that $\bgamma_v=\restrict{\bbeta'\rspace}{[z,v']}$. 
The previous two equalities clearly yield that $\restrict{\bbeta'\rspace}{[u,u']} = \restrict{\bgamma_u\rspace}{[u,u']}$ and $\restrict{\bbeta'\rspace}{[v,v']} = \restrict{\bgamma_v\rspace}{[v,v']}$.
Applying Corollary~\ref{hTdszkeCcj} to $[z,u']$ and to $[z,v']$,  we obtain that $\restrict{\bbeta'\rspace}{[u,u']}$ partitions $[u,u']$ to
$m+1-i_1-q$ blocks and that $\restrict{\bbeta'\rspace}{[v,v']}$ partitions $[v,v']$ to $m+1-i_1-q$  blocks, where
\[q =  |\set{s: 1<s\leq k,\,\, j_s<j_1 }|,\]
which is the number of dark grey $4$-cells in $[z,u']$ (and also in $[z,v']$). 
We also obtain from Corollary~\ref{hTdszkeCcj} that the above-mentioned blocks are ``positioned in parallel'', that is, for $x,y\in [u,u']$, we have $(x,y)\in\bbeta'$ if{f} $(x\vee v,y\vee v)\in \bbeta'$.

We know that $\bbeta=\bbeta'\vee \cgjcon{\ijcell {i_1}{j_1}}$ in the lattice of join-congruences of $K$ and also in the lattice of equivalences of $K$. The blocks of $\cgjcon{\ijcell {i_1}{j_1}}$ are given by Lemma~\ref{jIncRckL}; they are indicated by thick lines in Figure~\ref{fig:ket}.
Since $\bbeta'\subseteq \bdelta$, each element of $[w,w']$ belongs to a singleton $\bbeta'$-blocks. There are $n+1-j_1$ such (singleton) $\bbeta'$-blocks, and the northwest-southeast oriented thick edges merge them into other (not necessarily singleton) $\bbeta'$-blocks. Similarly, the northeast-southwest oriented thick edges merge  $q$ \ $\bbeta'$-blocks of $[z,u']$ to the respective  blocks in $[v,v']$.
Therefore, 
\begin{equation}\label{aiJfWhb}
|K/\bbeta|=|K/\bbeta'|-(m+1-i_1-q)-(n+1-j_1)\text.
\end{equation}
Since $q$ is the number of inversions with $j_1$, we have that $q=\inv\tau-\inv\sigma$. Combining this equation with \eqref{prikmineyTfy} and \eqref{aiJfWhb} 
we obtain the desired \eqref{prizTfGx}.
\end{proof}

\begin{proposition}\label{lMainvnoMla}
Let $D$ be a slim, semimodular, planar diagram, and let $\pi$ be the permutation associated with $D$ in \eqref{fkintropi}. Then $|D|=h+1+\inv\pi$.
\end{proposition}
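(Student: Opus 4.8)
The plan is to obtain Proposition~\ref{lMainvnoMla} as a direct specialization of Proposition~\ref{crLLtThma}. First I would attach to $D$ the standard machinery from the preceding subsection: the associated grid diagram $G$ of type $h\times h$, the cover-preserving surjective join-homomorphism $\phi\colon G\to D$, $(x,y)\mapsto x\vee y$, and its kernel $\balpha=\Ker\phi$. Writing $K$ for the lattice determined by $G$, surjectivity of $\phi$ together with $\balpha=\Ker\phi$ gives $K/\balpha\cong D$, so $|D|=|K/\balpha|$ and it suffices to compute $|K/\balpha|$. Now Lemma~\ref{lMsckLs}(ii) supplies the crucial identification $\balpha=\bigvee_{i=1}^h \cgjcon{\ijcell i{\pi(i)}}$, which is precisely a join-congruence of the form treated in Corollary~\ref{hTdszkeCcj} and Proposition~\ref{crLLtThma}.

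Next I would check that the full-permutation case genuinely fits those hypotheses, and record the resulting parameter values. Taking $m=n=k=h$, $i_s=s$ for $s\in\set{1,\ldots,h}$, and $j_s=\pi(s)$, the indices $i_s=s$ are strictly increasing, and the $j_s=\pi(s)$ are pairwise distinct exactly because $\pi\in S_h$; this is the only place where the permutation property is used. In this specialization the partial permutation $\tau$ of Proposition~\ref{crLLtThma} coincides with $\pi$ itself, so $\inv\tau=\inv\pi$, and $\bbeta=\balpha$. Hence \eqref{prizTfGx} applies verbatim and computes $|K/\balpha|$.

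What remains is pure arithmetic. Substituting $m=n=k=h$ and $\inv\tau=\inv\pi$ into \eqref{prizTfGx}, and using that both $\set{i_s:1\leq s\leq h}$ and $\set{j_s:1\leq s\leq h}$ exhaust $\set{1,\ldots,h}$, so that $\sum_{s=1}^h i_s=\sum_{s=1}^h j_s=h(h+1)/2$ and therefore $\sum_{s=1}^h(i_s+j_s)=h(h+1)$, the four terms telescope: the value becomes $(h+1)^2+\inv\pi-h(2h+2)+h(h+1)$, whose quadratic and mixed terms cancel to leave $h+1+\inv\pi$. Together with $|D|=|K/\balpha|$ this yields the assertion. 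Since the substantive work is carried by Proposition~\ref{crLLtThma}, there is no real obstacle here and the statement is essentially a corollary; the only points needing a moment's care are confirming that distinctness of the $j_s$ is exactly the permutation property of $\pi$, and verifying that the two symmetric sums over $\set{1,\ldots,h}$ cancel the quadratic contributions.
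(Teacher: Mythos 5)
Your proposal is correct and follows exactly the paper's own route: identify $|D|=|K/\balpha|$ via the Homomorphism Theorem, use Lemma~\ref{lMsckLs}(ii) to write $\balpha$ in the form covered by Proposition~\ref{crLLtThma}, and substitute $(m,n,k,\tau)=(h,h,h,\pi)$ into \eqref{prizTfGx}. You merely spell out the cancellation $(h+1)^2+\inv\pi-h(2h+2)+h(h+1)=h+1+\inv\pi$ that the paper calls ``clear,'' and your arithmetic is right.
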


\begin{proof}Let $L$ be the lattice determined by $D$. It follows from  Lemma~\ref{lMsckLs} and the Homomorphism Theorem that $|D|=|L|=|G/\balpha|$. Hence Proposition~\ref{crLLtThma} applies, and the substitution $(m,n,k,\sigma):=(h,h,h,\pi)$ clearly turns the  right side of \eqref{prizTfGx}  into $h+1+\inv\pi$.
\end{proof}

\subsection*{Permutations corresponding to slim, distributive lattices} For a planar diagram $D$ of a slim, semimodular lattice $L$, let $\PIntrv D$ denote the set of prime intervals of $L$, that is, the set of edges of $D$. The transitive reflexive closure of the relation
\[\bigset{\bigl([a,b],[c,d]\bigr):[a,b] \text{ and }[c,d]\text{ are opposite sides of a $4$-cell}  }
\]
is called \emph{prime projectivity}. It is an equivalence relation on $\PIntrv D$, and its blocks are called \emph{trajectories}. A trajectory can be visualized by its \emph{strip}, which is the set of $4$-cells determined by consecutive edges of the trajectory. 
For example, the strip from $[g_B,g_B']$ to $[h_B,h_B']$ in Figure~\ref{fig:har} is depicted in grey.  If only some consecutive edges of a trajectory are taken, then they determine a \emph{strip section}.  We recall the following statement from  G.~Cz\'edli and  E.\,T.~Schmidt~\cite[Lemma 2.8]{r:czg-sch-JH}.

\begin{lemma}\label{traJlemMa} Each trajectory of $D$ starts at a unique prime interval of $\leftb D$, and it goes to the right. First it goes upwards $($possibly in zero steps$)$, then it goes downwards $($possibly in
zero steps$)$, and finally it reaches a unique prime interval of $\rightb D$. 
In particular, once it is going down,
there is no further turn.
Trajectories never branch out.
\end{lemma}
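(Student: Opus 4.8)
The plan is to push the whole question down to the grid $G$ through the cover-preserving surjection $\phi\colon G\to D$, $(x,y)\mapsto x\vee y$ of Lemma~\ref{lMsckLs}, where $\balpha=\Ker\phi=\bigvee_{i=1}^{h}\cgjcon{\ijcell i{\pi(i)}}$ with $\pi=\pi(D)$. In $G$ the statement is immediate, because every $4$-cell $\ijcell ij$ is ``untilted'': its two opposite-side pairs are the two horizontal edges and the two vertical edges, and the trajectories of $G$ are exactly the \emph{rows} $R_i=\{[(b_{i-1},c_j),(b_i,c_j)]:0\le j\le h\}$ and the \emph{columns} $Q_j=\{[(b_i,c_{j-1}),(b_i,c_j)]:0\le i\le h\}$. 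Each $R_i$ ascends monotonically from its end $[b_{i-1},b_i]\in\leftb G$ to its end on $\rightb G$, and each $Q_j$, traversed from its end on $\leftb G$ to its end on $\rightb G$, descends monotonically; so in the grid the lemma holds, each trajectory being monotone with one of the two parts empty.

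Next I would transport this picture through $\phi$. From the explicit block description in Corollary~\ref{hTdszkeCcj} one reads off which edges of $G$ survive in $D$ (those whose endpoints are not $\balpha$-collapsed): a row $R_i$ retains exactly the edges up to and including the lower edge of its unique source cell $\ijcell i{\pi(i)}$, all higher edges of $R_i$ dying inside the collapsed top of that cell. The source cell itself collapses to a single edge $\inp_i$ of $D$, since its bottom is not identified with its top. The \emph{same} source cell is the unique one met by the column $Q_{\pi(i)}$, whose surviving edges run up to $\inp_i$ as well. Hence the $\phi$-images of the surviving parts of $R_i$ and of $Q_{\pi(i)}$ are two edge-paths of $D$ that meet exactly in the edge $\inp_i$.

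The decisive step is to verify that this glued path is one complete trajectory of $D$ and to read the geometry off it. Two surviving edges that are opposite sides of an uncollapsed cell of $G$ stay opposite sides of its image $4$-cell in $D$; the only new adjacency is created at the source cell, where collapsing the top fuses the two lower edges of $\ijcell i{\pi(i)}$ into $\inp_i$ and thereby makes the topmost ascending edge coming from $R_i$ consecutive with the topmost descending edge going into $Q_{\pi(i)}$. Since $\pi$ is a permutation, each row and each column contains exactly one source cell, so each trajectory of $D$ comes from exactly one pair $(R_i,Q_{\pi(i)})$ and has exactly one peak: it starts at the unique edge $[b_{i-1},b_i]\in\leftb D$, ascends along $\phi(R_i)$, turns once at $\inp_i$, and descends along $\phi(Q_{\pi(i)})$ to the unique edge $[c_{\pi(i)-1},c_{\pi(i)}]\in\rightb D$. ``No further turn once going down'' is then forced because a column meets only one source cell, and ``trajectories never branch out'' follows because $\phi$ is cover-preserving and, underneath, each prime interval of $D$ borders at most two $4$-cells.

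The real obstacle is the bookkeeping hidden in the last two paragraphs: one must prove that opposite-sidedness in $D$ is \emph{exactly} the $\phi$-image of opposite-sidedness in $G$ together with the single fusion at each source cell, with no spurious extra adjacencies and no merging of distinct pairs, so that the $h$ glued paths really are the $h$ trajectories of $D$. This is where Corollary~\ref{hTdszkeCcj} must be used quantitatively, in tandem with cover-preservation of $\phi$, through a case analysis of how a surviving edge can sit as a side of an image $4$-cell. A more intrinsic alternative would avoid $\phi$ entirely, but then the uniqueness of the turn has to be argued head-on: a second, downward-then-upward ``valley'' would exhibit two covers of one element that are not the two sides of a common $4$-cell, contradicting semimodularity, and making that reflection argument airtight is of comparable difficulty.
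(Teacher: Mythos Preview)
The paper does not prove this lemma at all: it is quoted verbatim from G.~Cz\'edli and E.\,T.~Schmidt~\cite[Lemma 2.8]{r:czg-sch-JH} and used as a black box. So there is no in-paper proof to compare your proposal against.

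That said, your route is genuinely different from what the cited source does. The proof in~\cite{r:czg-sch-JH} is the ``intrinsic alternative'' you allude to in your last sentence: it works directly in $D$, using semimodularity and planarity to argue that an edge on the boundary lies in one $4$-cell, an interior edge in two, and that a down-then-up valley would force a violation of semimodularity. Your plan instead pulls everything back to the grid $G$ via Lemma~\ref{lMsckLs} and Corollary~\ref{hTdszkeCcj}, where trajectories are trivially rows and columns, and then pushes forward through $\phi$. This is an appealing idea and, if completed, would give the trajectory structure and the identity $\hat\pi(D)=\pi(D)$ (Lemma~\ref{PiOthDflmA}) in one stroke.

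There are two genuine issues, however. First, a circularity risk: Lemma~\ref{lMsckLs} is itself imported from~\cite{czgschperm} and~\cite{ggwltsta}, and you would need to check that those proofs do not already rely on the trajectory lemma you are trying to establish. Second, and more seriously, the step you yourself flag as the ``real obstacle'' is not a matter of bookkeeping but the heart of the argument: you must show that the $4$-cells of the \emph{planar diagram} $D$ are exactly the $\phi$-images of the non-collapsed $4$-cells of $G$, with no new $4$-cells created by the quotient. This amounts to identifying the abstract quotient lattice $G/\balpha$ with the given planar diagram $D$ at the level of cells, not just elements, and Corollary~\ref{hTdszkeCcj} by itself does not deliver that---it describes $\balpha$-blocks, not the cell structure of the quotient diagram. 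Without this identification, your glued paths are paths in some combinatorial object, but not yet the trajectories of $D$ as defined.
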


Assume that $D$ is a slim, semimodular diagram with boundary chains \eqref{lrlchNps}. By Lemma~\ref{traJlemMa}, for each $i\in\set{1,\ldots,h}$ there is a unique $j\in \set{1,\ldots,h}$ such that the trajectory starting at $[b_{i-1},b_i]$ arrives at $[c_{j-1}, c_{j}]$. This defines a map $\hat\pi(D)\colon \set{1,\ldots,h}\to \set{1,\ldots,h}$, $i\mapsto j$.  For example, $\hat\pi(D)$ for Figure~\ref{fig:har} is
\begin{equation}\label{fFfighpRm}
\hat\pi(D)=\begin{pmatrix}
1&2&3&4&5&6&7&8\cr
2&7&6&4&1&8&3&5
\end{pmatrix}\text.
\end{equation}
This gives an alternative way to associate a permutation with $D$ using the following statement from G.~Cz\'edli and   E.\,T.~Schmidt~\cite{r:czg-sch-patch}.

\begin{lemma}\label{PiOthDflmA} For any planar, slim, semimodular diagram $D$, 
 $\hat\pi(D)$ equals $\pi(D)$ defined in \eqref{fkintropi}.
\end{lemma}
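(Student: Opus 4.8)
The plan is to exploit the cover-preserving surjective join-homomorphism $\phi\colon G\to D$, $(x,y)\mapsto x\vee y$, whose kernel is $\balpha=\Ker\phi=\bigvee_{i=1}^h\cgjcon{\ijcell i{\pi(i)}}$ by Lemma~\ref{lMsckLs}(ii). Fix $i\in\set{1,\ldots,h}$ and put $j=\pi(i)$; I want to show that the trajectory of $D$ starting at the left edge $[b_{i-1},b_i]$ reaches the right edge $[c_{j-1},c_j]$, for then $\hat\pi(D)(i)=j=\pi(i)$, as required by \eqref{fkintropi}. The idea is that this trajectory is the $\phi$-image of two grid trajectories of $G$ glued at the source cell $\ijcell i{\pi(i)}$: the ``column'' of vertical grid edges $e_{i,t}=[(b_{i-1},c_t),(b_i,c_t)]$, $t=0,\ldots,h$, supplies the upward part, and the ``row'' of horizontal grid edges $f_{s,j}=[(b_s,c_{j-1}),(b_s,c_j)]$, $s=0,\ldots,h$, supplies the downward part. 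Throughout, Corollary~\ref{hTdszkeCcj} (applied with $m=n=k=h$, $i_p=p$, $j_p=\pi(p)$) is used to decide exactly which grid edges $\phi$ collapses.

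First I would treat the upward part. By Corollary~\ref{hTdszkeCcj}(ii), $\phi(e_{i,t})$ is a nondegenerate prime interval of $D$ precisely when $t<\pi(i)=j$, while $\phi$ collapses every $e_{i,t}$ with $t\geq j$. For $t<j$ one checks, using the join-homomorphism identity $\phi(b_i,c_t)=\phi(b_i,c_{t-1})\vee\phi(b_{i-1},c_t)$, that the $\phi$-image of the $4$-cell $\ijcell i t$ is either a genuine $4$-cell of $D$ whose opposite sides are $\phi(e_{i,t-1})$ and $\phi(e_{i,t})$, or a single edge with $\phi(e_{i,t-1})=\phi(e_{i,t})$; by Corollary~\ref{hTdszkeCcj}(iii) the latter happens exactly when $\pi^{-1}(t)<i$ (the non-inversions, where both horizontal sides of the cell collapse). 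In either case $\phi(e_{i,0}),\ldots,\phi(e_{i,j-1})$ all lie in a single prime-projectivity class of $D$, which is the upward part of the trajectory.

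Next I would handle the turn and the downward part. At the source cell $\ijcell i j$ the $\balpha$-block of $(b_i,c_j)$ contains $(b_i,c_{j-1})$ and $(b_{i-1},c_j)$ but not $(b_{i-1},c_{j-1})$, so $\phi$ collapses its top three vertices to one point and hence $\phi(e_{i,j-1})=\phi(f_{i-1,j})$; this common edge is the peak. Symmetrically to the previous step, Corollary~\ref{hTdszkeCcj}(iii) shows $\phi(f_{s,j})$ is nondegenerate exactly for $s<i$, and for such $s$ the consecutive images $\phi(f_{s,j}),\phi(f_{s-1,j})$ are opposite sides of a $4$-cell of $D$ or coincide (governed now, via Corollary~\ref{hTdszkeCcj}(ii), by whether $\pi(s)\lessgtr j$). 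Thus $\phi(f_{i-1,j}),\ldots,\phi(f_{0,j})$ form the downward part, ending at the right edge $f_{0,j}=[c_{j-1},c_j]$. Consequently $[b_{i-1},b_i]$ and $[c_{j-1},c_j]$ lie in one prime-projectivity class, and since by Lemma~\ref{traJlemMa} the trajectory from $[b_{i-1},b_i]$ has a unique right-boundary endpoint, that endpoint must be $[c_{j-1},c_j]$; hence $\hat\pi(D)(i)=\pi(i)$.

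I expect the main obstacle to be the bookkeeping of the collapsed cells: one must verify that wherever $\phi$ degenerates a $4$-cell along the column or the row it kills an entire opposite pair of edges, so that the pushed-forward path neither branches nor turns prematurely, and that the single genuine turn occurs exactly at the source cell $\ijcell i{\pi(i)}$. This is precisely where the inversion/non-inversion dichotomy ($\pi^{-1}(t)\lessgtr i$ along the column, $\pi(s)\lessgtr j$ along the row) extracted from Corollary~\ref{hTdszkeCcj}(ii),(iii) does the work, together with the up-then-down shape guaranteed by Lemma~\ref{traJlemMa}, which rules out the trajectory overshooting $[c_{j-1},c_j]$.
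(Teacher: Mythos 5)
The paper does not actually prove Lemma~\ref{PiOthDflmA}: it is quoted without proof from the cited reference \cite{r:czg-sch-patch}, so there is no internal argument to compare yours against. On its own terms, your proof is essentially correct and is the natural direct verification: you push the $i$-th column and the $\pi(i)$-th row of the grid through the cover-preserving join-homomorphism $\phi$, use Corollary~\ref{hTdszkeCcj}\eqref{hTdszkeCcjb},\eqref{hTdszkeCcjc} (with $i_p=p$, $j_p=\pi(p)$) to see that $e_{i,t}$ survives exactly for $t<\pi(i)$ and $f_{s,j}$ exactly for $s<i$, link consecutive surviving edges by ``equal or opposite sides of a $4$-cell'', glue the two halves at the source cell where $\phi(e_{i,j-1})=\phi(f_{i-1,j})$, and finish with $\phi(e_{i,0})=[b_{i-1},b_i]$, $\phi(f_{0,j})=[c_{j-1},c_j]$ and the uniqueness of the right endpoint from Lemma~\ref{traJlemMa}. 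Two small points are elided but are standard and easily supplied: (a) when neither pair of opposite edges of a grid cell is collapsed, the four $\phi$-images are distinct (if $\phi(b_i,c_{t-1})=\phi(b_{i-1},c_t)$ then joining would collapse $f_{i,t}$, a contradiction) and form a cover-preserving square; and (b) such a covering square of a slim semimodular diagram is genuinely a $4$-cell (no interior elements), which is needed because prime projectivity is defined via $4$-cells, not arbitrary covering squares --- this is a known lemma from \cite{r:czg-sch-visual}/\cite{r:czg-sch-JH} that you should cite explicitly. With those two sentences added, the argument is complete.
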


Let $\pi\in S_h$. We say that the permutation $\pi$ \emph{contains the 321 pattern} if there are $i<j<k\in\set{1,\ldots,h}$ such that $\pi(i)>\pi(j)>\pi(k)$. The distributivity of $D$ is characterized by the following statement.

\begin{proposition}\label{lMadisjLLp} Let $D$ be a slim, semimodular diagram, and let $\pi=\pi(D)$ denote the permutation associated with it. Then $D$ is distributive if{f} $\pi$ does not contain the 321 pattern.
\end{proposition}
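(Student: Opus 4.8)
The plan is to reduce distributivity to the non-existence of a pentagon $N_5$, and then to match pentagons with occurrences of the 321 pattern by means of the trajectory description of $\pi$ in Lemmas~\ref{traJlemMa} and~\ref{PiOthDflmA}. First I would show that $D$ can contain no cover-preserving $M_3$, that is, no length-two interval $[P,Q]$ having three pairwise incomparable elements $A_1,A_2,A_3$ between $P$ and $Q$, listed from left to right. Such an interval would be the union of the two $4$-cells $\set{P,A_1,A_2,Q}$ and $\set{P,A_2,A_3,Q}$, which share the edges $[P,A_2]$ and $[A_2,Q]$. Inside the left cell the edge $[P,A_2]$ is opposite to $[A_1,Q]$, and inside the right cell it is opposite to $[A_3,Q]$; hence the trajectory through $[P,A_2]$ contains the consecutive edges $[A_1,Q]$, $[P,A_2]$, $[A_3,Q]$. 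Since $[A_1,Q]$ and $[A_3,Q]$ are incident with the top $Q$ while $[P,A_2]$ is incident with the bottom $P$, this trajectory would go down and then up again as it proceeds to the right, contradicting Lemma~\ref{traJlemMa}. Now a non-distributive modular lattice of finite length always contains a cover-preserving $M_3$, and a finite lattice is distributive if{}f it is modular and contains no $M_3$; therefore for our $D$ the three properties \emph{distributive}, \emph{modular} and \emph{$N_5$-free} coincide. So it remains to prove that $D$ has a sublattice isomorphic to $N_5$ if{}f $\pi$ contains the 321 pattern.

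Assume first that $i<j<k$ witness the 321 pattern, i.e.\ $\pi(i)>\pi(j)>\pi(k)$. By Lemma~\ref{PiOthDflmA} the trajectories starting at $[b_{i-1},b_i]$, $[b_{j-1},b_j]$, $[b_{k-1},b_k]$ end at the right edges indexed $\pi(i)$, $\pi(j)$, $\pi(k)$, so they cross pairwise in a nested way. The source cell $\ijcell j{\pi(j)}$ forces, via Corollary~\ref{hTdszkeCcj}, the collapse $b_j\vee c_{\pi(j)-1}=b_j\vee c_{\pi(j)}$, so the comparable pair $c_{\pi(j)-1}<c_{\pi(j)}$ has a common join with $b_j$; the outer cells $\ijcell i{\pi(i)}$ and $\ijcell k{\pi(k)}$, with $\pi(i)>\pi(j)>\pi(k)$, keep $b_j$ incomparable to $c_{\pi(j)}$ and make its meets with $c_{\pi(j)-1}$ and $c_{\pi(j)}$ equal, so that $\set{b_j\wedge c_{\pi(j)},\,c_{\pi(j)-1},\,c_{\pi(j)},\,b_j,\,b_j\vee c_{\pi(j)}}$ is a pentagon and $D$ is not distributive. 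Conversely, if $\pi$ avoids 321, then by the Erd\H os--Szekeres/Dilworth dichotomy the set $\set{1,\dots,h}$ splits into two blocks on each of which $\pi$ is increasing; geometrically no three trajectories cross pairwise, and reading this back through Corollary~\ref{hTdszkeCcj} I would check that no pentagon can occur, so that $D$ is modular, hence distributive.

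The delicate part is to make the correspondence between the nested crossing of three trajectories and a pentagon hold \emph{uniformly}, not merely in an example. In the forward direction one must derive the independence relation $b_j\wedge c_{\pi(j)}=b_j\wedge c_{\pi(j)-1}$ and the strictness $b_j\vee c_{\pi(j)}>c_{\pi(j)}$ from the presence of the outer source cells $\ijcell i{\pi(i)}$ and $\ijcell k{\pi(k)}$, thereby distinguishing a genuine 321 pattern (which yields $N_5$) from a lone source cell such as the ones occurring for the identity permutation (which only produce a chain). In the converse direction one must exclude every conceivable pentagon from the two-increasing-run structure of a 321-avoiding permutation. Both tasks reduce to careful, if routine, bookkeeping with the block description of Corollary~\ref{hTdszkeCcj}, and this is where the real work lies.
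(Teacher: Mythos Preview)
Your approach differs substantially from the paper's. The paper never reduces to $N_5$; for the implication ``not distributive $\Rightarrow$ 321'' it invokes the external fact (from \cite{r:czg-sch-visual}) that a non-distributive slim semimodular lattice contains a cover-preserving copy of the seven-element lattice $\mySseven$ of Figure~\ref{fig:har}, and then follows the three trajectories through the three right-boundary edges of that $\mySseven$ to produce a 321 pattern. For the implication ``distributive $\Rightarrow$ no 321'' it invokes another external fact, that a distributive slim lattice is dually slim, so no element has three lower covers; hence no trajectory ever turns, each goes entirely up or entirely down, and a 321 pattern (which forces three pairwise-crossing trajectories) becomes impossible with only two available directions.

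Your direction ``321 $\Rightarrow N_5$'' is correct, and in fact more self-contained than the paper's corresponding argument, since it avoids the dual-slimness citation. The five elements you name really do form an $N_5$. The only nontrivial point is the meet equality $b_j\wedge c_{\pi(j)}=b_j\wedge c_{\pi(j)-1}$, and this does follow from Corollary~\ref{hTdszkeCcj}: any element below both $b_j$ and $c_{\pi(j)}$ has the form $b_s\vee c_t$ with necessarily $s<i$ (forced by $\pi(i)>\pi(j)$) and $t<\pi(k)$ (forced by $k>j$), whence it already lies below $c_{\pi(j)-1}$. This genuinely uses both outer indices $i$ and $k$ and is a real computation, so ``routine bookkeeping'' undersells it somewhat, but the argument goes through.

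The genuine gap is your converse, ``no 321 $\Rightarrow$ no $N_5$''. You propose to verify this by ``reading back through Corollary~\ref{hTdszkeCcj}'' from the two-increasing-run decomposition, but there is no mechanism in sight: an arbitrary $N_5$ sublattice of $D$ need not have its elements on the boundary chains, its edges need not be prime intervals, and it bears no evident relation to trajectory crossings. This is precisely why the paper works with $\mySseven$ rather than $N_5$ for this direction---the cover-preserving $\mySseven$ carries three prime intervals on each of its boundary chains, and \emph{that} specific structure is what produces three pairwise-crossing trajectories and hence a 321 pattern. By passing to $N_5$ you have discarded exactly the structural information needed here. To repair the argument you would have to either prove directly that a 321-avoiding $\pi$ yields a distributive $D$ (for instance by exhibiting an embedding of $D$ into a product of two chains built from the two increasing runs, which is plausible but is not what Corollary~\ref{hTdszkeCcj} hands you), or fall back on the $\mySseven$ characterization as the paper does.
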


\begin{figure}
\centerline
{\includegraphics[scale=1.0]{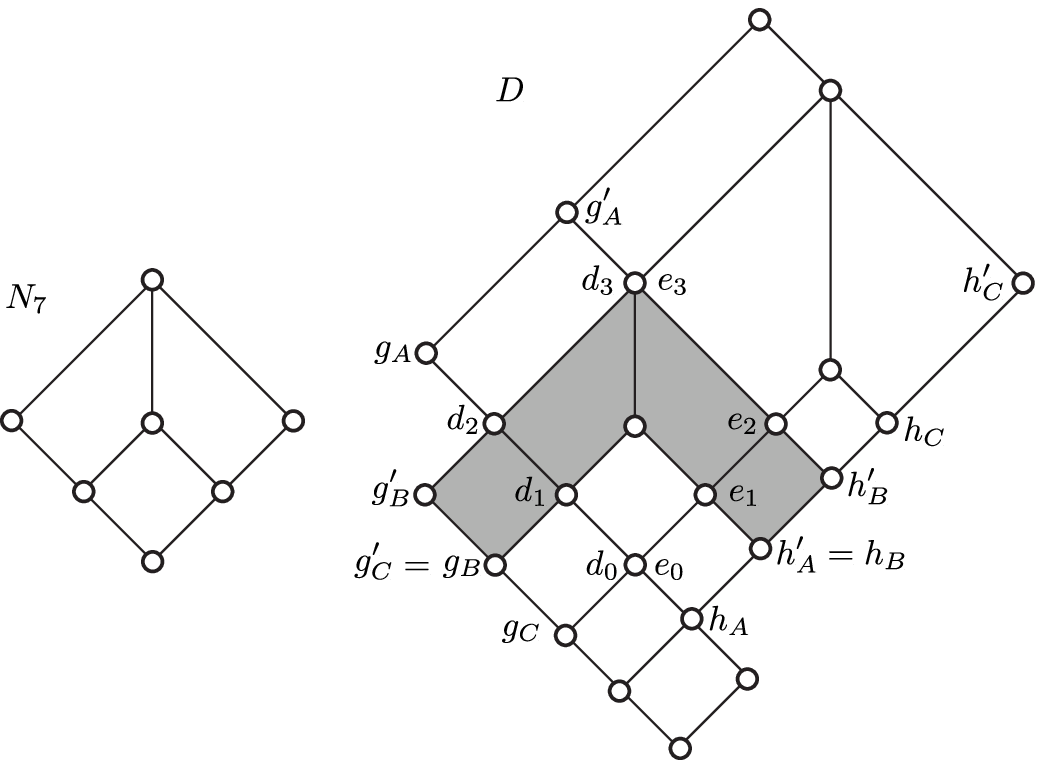}}
\caption{$\mySseven$ and a trajectory \label{fig:har}}
\end{figure}

\begin{proof} In virtue of Lemma~\ref{PiOthDflmA}, we work with $\pi=\hat\pi(D)$. In order to prove the necessity part of Proposition~\ref{lMadisjLLp}, we assume  that $D$ is not distributive. We obtain from G.~Cz\'edli and  E.\,T.~Schmidt~\cite[Lemma 15]{r:czg-sch-visual} that $D$ contains, as a cover-preserving sublattice, a copy of $\mySseven$, given in Figure~\ref{fig:har}. Let $\set{d_0\prec d_1\prec d_2\prec d_3}$ and $\set{e_0\prec e_1\prec e_2\prec e_3}$ be the left and right boundary chains, respectively, of a subdiagram of $D$ representing $\mySseven$, see Figure~\ref{fig:har}. Let $A,B,C$ denote the trajectories containing $[e_0,e_1]$, 
$[e_1,e_2]$, and $[e_2,e_3]$, respectively. The corresponding strip sections, starting at these edges and going to the right, are denoted by $A^{\ast}$, $B^{\ast}$, and $C^{\ast}$, respectively. 
Let us denote the last members of these trajectories by  $[h_A,h_A'], [h_B,h_B'],  
[h_C,h_C']\in \PIntrv{\rightb D}$, respectively. 
We claim that 
\begin{equation}\label{wRpbqSgxrghtB} h_A\prec h_A' \leq  h_B\prec h_B' \leq h_C\prec h_C'\text.
\end{equation}
Suppose for a contradiction that $h_A'\not\leq h_B$. Then $h_B'\leq h_A$ since $[h_A,h_A']\neq  [h_B,h_B']$  by Lemma~\ref{traJlemMa}, and $h_B'$ and $h_A$ belong to the chain $\rightb D$. Thus $A^{\ast}$ must cross $B^{\ast}$ at a $4$-cells such that $A^{\ast}$ crosses this $4$-cell upwards (that is, to the northeast). But this is impossible by Lemma~\ref{traJlemMa} since $A$ and thus $A^\ast$ went downwards previously at $[e_0,e_1]$.
A similar contradiction is obtained from $h_B'\not\leq h_C$ since $B^\ast$ goes downwards through $[e_1,e_2]$,  and thus it cannot cross a square upwards later. This proves \eqref{wRpbqSgxrghtB}.  

Next, let $[g_A,g_A'], [g_B,g_B'],  
[g_C,g_C']\in \PIntrv{\leftb D}$ denote the first edges of $A,B,C$, respectively. Since $[d_0,d_1]\in C$, $[d_1,d_2]\in B$, and $[d_2,d_3]\in A$, the left-right dual of the argument leading to \eqref{wRpbqSgxrghtB} yields that 
\begin{equation}\label{uRgmszsSgxrght} g_C\prec g_C' \leq  g_B\prec g_B' \leq g_A\prec g_A'\text.
\end{equation}
Therefore, in virtue of Lemma~\ref{PiOthDflmA},  \eqref{wRpbqSgxrghtB} together with \eqref{uRgmszsSgxrght} yields a 321 pattern in $\pi$.

Now, to prove the sufficiency part, assume that $D$ is distributive. Then it is dually slim by G.~Cz\'edli and  E.\,T.~Schmidt~\cite[Lemma 16]{r:czg-sch-visual}. Hence, by the dual of \cite[Lemma 16]{r:czg-sch-visual}, no element of $D$ has more than two lower covers. Thus each trajectory goes (entirely) either upwards,  or  downwards; that is, a trajectory cannot make a turn. 
Suppose for a contradiction that $\pi$ contains a 321 pattern. Then, like previously, we have trajectories $A,B,C$ such that \eqref{wRpbqSgxrghtB} and \eqref{uRgmszsSgxrght} holds. Any two of the corresponding strips must cross at a $4$-cell since their starting edges are in the opposite order as their ending edges are. Therefore any two of the three strips go to different directions, which is impossible since there are only two directions: upwards and downwards. This contradiction completes the proof.
\end{proof}

\subsection*{Permutations with the same lattice} To accomplish our goal, we have to know when two permutations determine the same slim, semimodular lattice. Below, we recall the necessary information and notation from G.~Cz\'edli and E.\,T.~Schmidt~\cite{czgschperm}, see also G.~Cz\'edli and G.~Gr\"atzer~\cite{ggwltsta} and  G.~Cz\'edli, L.~Ozsv\'art, and B.~Udvari \cite{r:czgolub}. 
Assume that $1 \leq u \leq v \leq h$ and $\pi\in S_h$. If $I = [u, v] = \set{i\in \mathbb N :  u \leq i \leq v}$ is nonempty and $[1, u - 1]$, $I$, and $[v + 1, h]$ are closed with respect to $\pi$, then $I$ is called a \emph{section} of $\pi$. Sections minimal with respect to set inclusion are called \emph{segments}. Let $\Seg\pi$ denote the set of all segments of $\pi$. For example, 
if $\pi=\hat\pi(D)$ from \eqref{fFfighpRm}, then $\pi$ has only one segment, $\set{1,\ldots,8}$. Another example is
\begin{equation}\label{eiKwGHsflIKG}
\pi=\begin{pmatrix}
1&2&3&4&5&6\cr
1&3&4&2&6&5
\end{pmatrix}\text{ with }\Seg\pi=\bigset{
\set1,\set{2,3,4},\set{5,6}}\text.
\end{equation}
For a subset $A$ of $\set{1,\ldots,h}$, let $\restrict \pi A$ denote the restriction of $\pi$ to $A$. The set of $A\to A$ permutations is denoted by $S_A$. 
Notice that $\Seg\tau$ also makes sense for $\tau\in S_A$ since the natural order of $\set{1,\ldots,h}$ is automatically restricted to $A$. If $A\in\Seg \pi$, then $\restrict \pi A\in S_A$ and $\restrict \pi {\set{1,\ldots,h}-A}\in S_{\set{1,\ldots,h}-A}$. The unique ${I_1}\in\Seg\pi$ with $1\in {I_1}$ is the \emph{initial segment} of $\pi$. 
We adopt the following terminology:
\begin{equation}\label{termheadBody}
\begin{aligned}
 \head\pi&=\restrict \pi {I_1}\in S_{I_1} \text{ is the \emph{head} of $\pi$,}\cr 
\body\pi&=\restrict \pi {\set{1,\ldots,h}-{I_1}}\in S_{\set{1,\ldots,h}-{I_1}}\text{ is the \emph{body} of $\pi$.}
\end{aligned}
\end{equation}  
Note that $\body\pi$ can be the empty permutation acting on $\varnothing$. Clearly, the pair $\pdecomp\pi$  determines $\pi$;
 however, the two components of the pair $\pdecomp\pi$ are not arbitrary. We say that $\pi\in S_h$ is \emph{irreducible}, if its initial segment is $\set{1,\ldots,h}$. Note that $\pi$ is irreducible  if{f} $\head\pi=\pi$ or, equivalently, if{f} $\body\pi=\varnothing$.  Clearly, if $I=[1,u]$ is a \emph{nonempty initial interval} of $\set{1,\ldots,h}$, that is, if $1\leq u\leq h$, and, in addition, $\sigma\in S_I$, and $\tau\in S_{\set{1,\ldots,h}-I}$, then 
$(\sigma,\tau)$ coincides with  $\pdecomp\pi$ for some $\pi\in S_h$ if{f} $\sigma\in S_I$ is irreducible. For $\pi\in S_h$, the \emph{degree} of $\pi$ is $h$. 
We define the \emph{block}$\,$ $\pblock\pi$  of
$\pi$ by induction on the degree of $\pi$ as follows. 
If $\pi$ is irreducible, then we let $\pblock\pi=\set{\pi,\pi^{-1}}$. Otherwise, let
$\pblock\pi=\bigset{\sigma: \head\sigma \in \set{\head\pi, \head\pi^{-1}}\text{ and }  
\body\sigma \in \pblock {\body\pi} }$. 
%
%
Notice that 
\begin{align}
\simfactor{S_h} &= \set{\pblock\pi:\pi\in S_h}\text{ and, for every $\pi\in S_h$,}\cr
\pblock\pi&=\set{\sigma: \pblock{\head \sigma}=\pblock{\head\pi}\text{ and }\pblock{\body \sigma}=\pblock{\body\pi}}\text.\label{iGxzgahEw}
\end{align}
is the partition on $S_h$ associated with the so-called ``sectionally inverse or equal'' relation introduced in
G.~Cz\'edli and  E.\,T.~Schmidt~\cite{czgschperm}. 
It is well-known from H.\,A.~Rothe~\cite{rothe}, see also D.\,E.~Knuth~\cite{knuth} or one can prove it easily, that $\inv\sigma=\inv{\sigma^{-1}}$. This implies that $\inv\sigma=\inv\pi$ for every $\sigma\in\pblock\pi$. Hence we can define $\inv{\pblock\pi}$ by the equation $\inv{\pblock\pi}=\inv\pi$. 
The following statement is also taken from \cite{czgschperm}, see also part \eqref{lMsckLsc} of Lemma~\ref{lMsckLs} here.

\begin{lemma}\label{sicIweim} Let $D$ and $E$ be slim, semimodular, planar diagrams.  Then $D$ and $E$ determine isomorphic lattices if{f} $\,\pblock{\pi(D)}=\pblock{\pi(E)}$. 
\end{lemma}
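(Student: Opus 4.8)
The plan is to use the bijection of part~\eqref{lMsckLsc} of Lemma~\ref{lMsckLs} to replace diagrams by permutations throughout, writing $\pi=\pi(D)$ and $\sigma=\pi(E)$, and to prove the equivalent statement that $D$ and $E$ determine isomorphic lattices if and only if $\sigma\in\pblock\pi$. Both implications will be handled by induction on the length $h$, following the recursive definition of $\pblock\pi$ through the head--body decomposition $\pi\mapsto\pdecomp\pi$. The two structural inputs I would isolate first are: (a) a \emph{gluing lemma}, asserting that the lattice $L_\pi$ determined by $D$ is isomorphic to a canonical gluing of the lattice of $\head\pi$ (sitting at the bottom, along the initial segment $I_1=[1,u]$) and the lattice of $\body\pi$ (glued on top along boundary chains of matching length), where the gluing depends, up to isomorphism, only on the isomorphism types of the two pieces; and (b) the observation that the full left--right reflection $D\mapsto\mirror D$ is a lattice isomorphism whose permutation is $\pi^{-1}$, and that the same reflection applied to the bottom piece alone replaces $\head\pi$ by $\head\pi^{-1}$ while leaving the body untouched.

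Granting (a) and (b), sufficiency is short. If $\sigma\in\pblock\pi$, then by \eqref{iGxzgahEw} we have $\head\sigma\in\{\head\pi,\head\pi^{-1}\}$, so the bottom pieces of $\sigma$ and $\pi$ have isomorphic lattices by (b); and $\body\sigma\in\pblock{\body\pi}$, so by the induction hypothesis the top pieces have isomorphic lattices. Since the gluing in (a) is functorial in its two arguments up to isomorphism, it then yields $L_\sigma\cong L_\pi$, that is, $D\cong E$. The base case, $\pi$ irreducible with $h\le 1$, is trivial.

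For necessity I would argue that the entire head--body decomposition is an \emph{isomorphism invariant}. Concretely, using the trajectory description (Lemma~\ref{traJlemMa}) together with $\hat\pi(D)=\pi(D)$ (Lemma~\ref{PiOthDflmA}), the segments of $\pi$ correspond to the ``narrows'' of \emph{every} planar diagram of $L_\pi$, so that $\Seg{\pi(D')}$ is one and the same for all diagrams $D'$ of a fixed lattice; hence an isomorphism $L_\pi\cong L_\sigma$ matches initial segments and carries the intrinsic bottom piece (the interval of $L$ below the first narrows) to the bottom piece and the body to the body. This gives $L_{\body\sigma}\cong L_{\body\pi}$, whence $\body\sigma\in\pblock{\body\pi}$ by induction, together with an isomorphism of the two \emph{irreducible} bottom lattices. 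It then remains to treat the irreducible case: if $\alpha,\beta$ are irreducible and their lattices are isomorphic, then $\beta\in\{\alpha,\alpha^{-1}\}=\pblock\alpha$. This is where the main obstacle lies. It is the rigidity statement that a gluing-indecomposable slim, semimodular lattice has, up to similarity, exactly two planar diagrams — a diagram and its mirror — which I would derive from the planar rigidity results of Kelly and Rival~\cite{kellyrival}, combined with Lemma~\ref{simLmaPlD} (similarity is detected by a single boundary chain) and the trajectory structure, to rule out any further diagram. Assembling the irreducible case with the inductive step completes necessity and hence the equivalence; the whole argument is, of course, just a rederivation in the present notation of the result of~\cite{czgschperm}.
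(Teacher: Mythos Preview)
The paper does not prove this lemma at all; it simply quotes it from \cite{czgschperm} (``The following statement is also taken from \cite{czgschperm}''), so there is no proof in the paper to compare against. Your sketch is therefore not a comparison but an attempted reconstruction of the argument of \cite{czgschperm}, as you yourself note in your last sentence.

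That said, your outline is structurally sound and matches the strategy of \cite{czgschperm}: the head--body decomposition of $\pi$ does correspond to the decomposition of $L_\pi$ at its lowest narrow (an element on every maximal chain), and this is indeed an isomorphism invariant, so the induction you describe goes through on both sides once the base case is secured. You have also correctly located the one genuine obstacle, namely the rigidity statement in the irreducible case: if $\alpha,\beta\in S_h$ are irreducible and $L_\alpha\cong L_\beta$, then $\beta\in\{\alpha,\alpha^{-1}\}$. Your proposed route via Kelly--Rival plus Lemma~\ref{simLmaPlD} is the right idea but is not yet a proof: what is actually needed is the fact, proved in \cite{czgschperm} (see also \cite{r:czg-sch-visual}), that in a slim, semimodular lattice the \emph{set} $\leftb D\cup\rightb D$ is independent of the chosen planar diagram $D$, being intrinsically describable as the filter generated by the doubly irreducible elements together with $0$. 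Combined with irreducibility (no interior narrows, hence $\leftb D\cap\rightb D=\{0,1\}$) and Lemma~\ref{simLmaPlD}, this gives exactly two diagrams, interchanged by reflection, and hence $\beta\in\{\alpha,\alpha^{-1}\}$. Once you plug in that ingredient, your argument is complete.
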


\section{Counting}\label{countsection} 
\subsection*{Slim, semimodular lattices}
We introduce the following notation.
\begin{align*}
\kern 8pt&\begin{aligned}
\Pernik h k&=\set{\pi\in S_h: \inv \pi=k}\text{, }\cr
\Tpernik h k&=\set{\pi\in\Pernik h k: \pi^2=\id}\text{, }
\end{aligned}
\begin{aligned}
\Ipernik h k&=\set{\pi\in\Pernik h k: \pi\text{ is irreducible}},\cr
\Bpernik h k&=\set{\pblock\pi: \pi \in\Pernik h k},
\end{aligned}
\cr
&\kern-3pt \Bhpernik s t h k=\set{\pblock\pi: \pi \in\Pernik h k\text{, }\head\pi\in S_s \text{, and } \inv{\head\pi}=t},
\cr&\kern 2pt \Itpernik h k =\set{\pi\in \Tpernik h k: \pi\text{ is irreducible}}\text.
\end{align*}
Here $\Perniksign$ and $\Tperniksign$ comes from ``permutation'' and ``involution''. Their parameters denote the length of permutations and the number of inversions, while ${}^\sim$ and $\kern 3pt\widehat{}\kern 3pt$ stand for blocks and irreducibility, respectively.   
The sizes of these sets are denoted by the corresponding lower case letters; for example, $\bpernik h k=|\Bpernik h k|$.
(Note that, as opposed to us, the literature denotes  $|\Pernik h k|$ usually by $I_h(k)$ rather than $\pernik h k$.) The binary function  $\perniksign$ is well-studied. Let
\begin{equation}\label{geNufcit}
G_h(x) = \sum_{j=0}^{\binomial h2}\pernik hj x^j
\end{equation}
denote its generating function. 
We recall the following result of 
O.\ Rodriguez~\cite{rodrigez} and  Muir~\cite{muir}, see also D.\,E.~Knuth~\cite[p.~15]{knuth}, or
B.\,H.~Margolius~\cite{margolius}, or M.~B\'ona~\cite[Theorem 2.3]{bona}.

\begin{lemma}\label{rrodrigE}
$\displaystyle{G_h(x)=\prod_{j=1}^h \sum_{t=0}^{j-1}x^t  = \prod_{j=1}^h \frac{1-x^j}{1-x}  }$.
\end{lemma}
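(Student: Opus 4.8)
The plan is to prove the product formula for the generating function $G_h(x)=\sum_{j}\pernik hj x^j$ by induction on $h$, exploiting the standard bijective decomposition of a permutation according to the position of the largest value $h$. First I would record the base case $h=1$: there is a single permutation in $S_1$ with zero inversions, so $G_1(x)=1=\sum_{t=0}^{0}x^t$, matching the claimed product with a single factor.

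\emph{Decomposition step.} For the inductive step, the key observation is that every $\pi\in S_h$ arises uniquely from a permutation $\pi'\in S_{h-1}$ by inserting the value $h$ into one of the $h$ available positions. Concretely, I would think of building $\pi$ from its one-line notation by first writing down a permutation of $\set{1,\ldots,h-1}$ and then inserting $h$; if $h$ is placed so that exactly $r$ entries lie to its right (with $r\in\set{0,1,\ldots,h-1}$), then $h$ forms an inversion with precisely those $r$ entries, and with no others, since $h$ is the maximum. Crucially, this insertion changes no other inversion among the smaller entries. Hence $\inv\pi=\inv{\pi'}+r$, and each pair $(\pi',r)$ with $\pi'\in S_{h-1}$ and $0\leq r\leq h-1$ corresponds to exactly one $\pi\in S_h$.

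\emph{Assembling the generating function.} Summing $x^{\inv\pi}$ over $S_h$ then factors as
\begin{equation*}
G_h(x)=\sum_{\pi\in S_h}x^{\inv\pi}=\Bigl(\sum_{\pi'\in S_{h-1}}x^{\inv{\pi'}}\Bigr)\Bigl(\sum_{r=0}^{h-1}x^r\Bigr)=G_{h-1}(x)\cdot\sum_{t=0}^{h-1}x^t\text.
\end{equation*}
By the induction hypothesis $G_{h-1}(x)=\prod_{j=1}^{h-1}\sum_{t=0}^{j-1}x^t$, so multiplying by the new factor $\sum_{t=0}^{h-1}x^t$ (the $j=h$ term) yields $G_h(x)=\prod_{j=1}^{h}\sum_{t=0}^{j-1}x^t$, which is the first claimed expression. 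The second expression follows immediately from the finite geometric sum identity $\sum_{t=0}^{j-1}x^t=(1-x^j)/(1-x)$, valid as an identity of rational functions (or of formal power series).

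\emph{Main obstacle.} The conceptual content is slight, so the only real care needed is in verifying the inversion bookkeeping in the decomposition step: one must confirm that inserting the maximal element $h$ contributes exactly $r$ new inversions and disturbs none of the existing ones. This is where I would be most explicit, since it is the sole nontrivial combinatorial claim; everything else is the routine manipulation of a product of geometric sums. An alternative, if one prefers to avoid the insertion argument, is to invoke the classical result directly, as the statement is attributed to Rodriguez and Muir; but the inductive proof above is self-contained and short.
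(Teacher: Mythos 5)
Your proof is correct. Note, however, that the paper itself offers no proof of this lemma at all: it is stated as a recalled classical result, attributed to Rodriguez and Muir with further pointers to Knuth, Margolius, and B\'ona. So there is no argument in the paper to compare against; your insertion argument (building $\pi\in S_h$ from $\pi'\in S_{h-1}$ by placing the maximal value $h$ with $r$ entries to its right, contributing exactly $r$ new inversions and disturbing none of the old ones) is precisely the standard textbook proof found in those references, and your bookkeeping is right: since $h$ exceeds every other entry, it forms an inversion with each of the $r$ entries to its right and with nothing to its left, and the relative order of the remaining entries is untouched, so $\inv\pi=\inv{\pi'}+r$ and the generating function factors as $G_h(x)=G_{h-1}(x)\cdot\sum_{t=0}^{h-1}x^t$. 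The only thing your write-up adds beyond what the paper needs is the self-containedness; the paper's choice to cite rather than prove is equally legitimate for a result this classical, but your version costs only a few lines and makes the section independent of the combinatorics literature.
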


We are now in the position to formulate the following theorem.

\begin{theorem}\label{thmmaIn}
The number $\numssl n$ of slim, semimodular lattices of size $n$ is determined by Lemma~\ref{rrodrigE} together with   
the following $($recursive$)$ formulas 
{\allowdisplaybreaks
\begin{align}
\numssl n&=\sum_{h=0}^{n-1} \bpernik h{n-h-1},  \label{thmmaIna} \\
\bpernik h k&=\frac 12\cdot\sum_{s=1}^h \sum_{t=0}^k   \bigl(\ipernik st + \itpernik st\bigr) \cdot \bpernik {h-s}{k-t}, \label{thmmaInb}\\
\ipernik h k  &= \pernik h k - \sum_{s=1}^{h-1}\sum_{t=0}^k \ipernik st \cdot \pernik{h-s}{k-t}, \label{thmmaInc}\\
\tpernik h k &=
\tpernik {h-1}k+\sum_{s=2}^h \tpernik {h-2}{k-2s+3},    \label{thmmaInd}\\
\itpernik h k &=  \tpernik h k -   \sum_{s=1}^{h-1}\sum_{t=0}^k \itpernik st \cdot \tpernik{h-s}{k-t} \label{thmmaIne}
\end{align}
for $n, h\in \mathbb N$ and $k\in\mathbb N_0$, and with the initial values 
\begin{align*}
\bpernik h0&=\pernik h0=\tpernik h0=1=\ipernik 10=\itpernik 10 \,\text{ for }\,h\in\mathbb N_0,\\
\bpernik h k=\pernik h k&=\ipernik h k=\tpernik h k=\itpernik h k=0\,\text{ if }\, k>\binomial h2\text{ or } \set{h,k}\not\subseteq \mathbb N_0,\\
\ipernik h 0&=\itpernik h0=0,\text{ if }\,h>1
\text.
\end{align*}
}
\end{theorem}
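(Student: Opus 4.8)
The plan is to establish the five displayed recursions one at a time from the correspondence between slim, semimodular diagrams and permutations developed in Section~\ref{lattsecT}, and then to read off the initial values by inspection.

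For \eqref{thmmaIna} I would assemble the bijections already in hand. By part~\eqref{lMsckLsc} of Lemma~\ref{lMsckLs}, slim, semimodular diagrams of length $h$ correspond bijectively to $S_h$, and by Lemma~\ref{sicIweim} two such diagrams yield isomorphic lattices exactly when their permutations lie in a common block; hence the isomorphism classes coming from length $h$ are counted by $\simfactor{S_h}$. Since $\inv$ is constant on each block, Proposition~\ref{lMainvnoMla} shows that a block $\pblock\pi$ with $\inv\pi=k$ produces lattices of size $h+1+k$. Fixing the size $n$ and putting $k=n-h-1$, the blocks from length $h$ that contribute size $n$ number $\bpernik h{n-h-1}$, and summing over the admissible lengths $0\le h\le n-1$ gives \eqref{thmmaIna}.

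The irreducibility recursions \eqref{thmmaInc} and \eqref{thmmaIne} rest on one easy observation: because the initial segment $I_1$ is closed under $\pi$ (as is its complement), no inversion straddles $I_1$ and its complement, so the splitting \eqref{termheadBody} is additive, $\inv\pi=\inv{\head\pi}+\inv{\body\pi}$. Every $\pi\in S_h$ is uniquely the concatenation of an irreducible head on some $S_s$ with an arbitrary body on $S_{h-s}$, the case $s=h$ being exactly irreducibility; sorting $\Pernik hk$ by the degree $s$ and inversion count $t$ of the head yields $\pernik hk=\ipernik hk+\sum_{s=1}^{h-1}\sum_{t=0}^k\ipernik st\,\pernik{h-s}{k-t}$, which rearranges to \eqref{thmmaInc}. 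For \eqref{thmmaIne} I would first note that the head and body of an involution are again involutions (immediate from $\pi^2=\id$ together with the closedness of $I_1$), and that an irreducible involution head concatenated with an involution body is an involution; the same sorting applied to $\Tpernik hk$ then gives \eqref{thmmaIne}. For the block recursion \eqref{thmmaInb} I would invoke \eqref{iGxzgahEw}, which says that a block is determined precisely by the pair consisting of its head-block and its body-block, so that counting blocks reduces to counting such pairs. A body-block of degree $h-s$ and inversion $k-t$ is counted by $\bpernik{h-s}{k-t}$. A head-block is $\set{\rho,\rho^{-1}}$ for an irreducible $\rho$; since $\inv\rho=\inv{\rho^{-1}}$ it carries a well-defined inversion number, and it is a singleton exactly when $\rho$ is an involution. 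Thus the irreducible permutations in $S_s$ with $t$ inversions form $\itpernik st$ singleton head-blocks and $(\ipernik st-\itpernik st)/2$ two-element ones, i.e.\ $\tfrac12(\ipernik st+\itpernik st)$ head-blocks; multiplying by the body-block count and summing over $s$ and $t$ (using additivity of inversions) produces \eqref{thmmaInb}.

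The computational heart of the proof, which I expect to be the main obstacle, is the involution recursion \eqref{thmmaInd}. I would classify an involution $\pi\in S_h$ by $a=\pi(h)$: if $a=h$, deleting the fixed point leaves an involution in $S_{h-1}$ with the same inversions, contributing $\tpernik{h-1}k$; if $a<h$, then $(a,h)$ is a transposition of $\pi$ and deleting both $a$ and $h$ leaves an involution $\pi'$ on the remaining $h-2$ points. The delicate part is the exact inversion bookkeeping: the value $h$ at position $a$ inverts with every later position, the value $a$ at position $h$ inverts with each earlier position whose value exceeds $a$, and---using that $\pi'$ permutes the remaining values---these total $\inv\pi=\inv{\pi'}+2(h-1-a)+1$. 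Re-indexing by $s=h-a+1$, so that $s$ runs from $2$ to $h$ and $\inv{\pi'}=k-2s+3$, yields the sum in \eqref{thmmaInd}. Finally I would settle the initial values directly: the identity is the unique permutation, hence the unique involution, with no inversions; it is irreducible only for $h=1$; and $\binomial h2$ is the largest possible number of inversions in $S_h$.
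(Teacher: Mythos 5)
Your proposal is correct and follows essentially the same route as the paper: counting blocks via Lemma~\ref{sicIweim} and Proposition~\ref{lMainvnoMla} for \eqref{thmmaIna}, the head--body decomposition with the singleton-versus-two-element head-block count $\tfrac12(\ipernik st+\itpernik st)$ for \eqref{thmmaInb}, and subtracting reducible permutations for \eqref{thmmaInc} and \eqref{thmmaIne}. The only (cosmetic) deviation is in \eqref{thmmaInd}, where you classify involutions by $\pi(h)$ rather than by $\pi(1)$ as the paper does; your inversion count $2(h-1-a)+1$ is the mirror image of the paper's $(s-1)+(s-2)$ and the bookkeeping checks out.
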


Notice that $\binomial h k=0$ if $k>h$.
Clearly, together with the initial values, \eqref{thmmaInd} determines the function $\tperniksign$,  \eqref{thmmaIne} gives the function $\itperniksign$, we can evaluate the function $\perniksign$ based on  Lemma~\ref{rrodrigE} and 
\eqref{geNufcit}, then \eqref{thmmaInc}
determines the function $\iperniksign$, \eqref{thmmaInb} yields $\bperniksign$, and, finally, \eqref{thmmaIna} yields $\numssl n$.

\begin{proof}[Proof of Theorem~\ref{thmmaIn}] By Lemma~\ref{sicIweim}, we have to count the blocks $\pblock p$ that give rise to $n$-element lattices. The initial values are obvious.

If $\pi\in S_h$, then  $\inv{\pblock p}= \inv \pi$ equals  $n-h-1$ by Proposition~\ref{lMainvnoMla}. This implies \eqref{thmmaIna}.

Next, $\pblock{\head\pi}$ is a singleton if $\head\pi^2=\id$, and it is two-element otherwise.  Thus, by \eqref{iGxzgahEw}, the number of blocks $\pblock\pi\in\Bpernik h k$ with $\head\pi^2=\id$ is 
\begin{equation}\label{nszeGwinea}
\sum_{s=1}^h \sum_{t=0}^k  \itpernik st \cdot \bpernik {h-s}{k-t}\text.
\end{equation}
Similarly, the  number of blocks $\pblock\pi\in\Bpernik h k$ with $\head\pi^2\neq\id$ is
\begin{equation}\label{nszeGwineb}
\sum_{s=1}^h \sum_{t=0}^k  \frac 12\cdot \bigl(\ipernik st -\itpernik st\bigr) \cdot \bpernik {h-s}{k-t}\text.
\end{equation}
Forming the sum of \eqref{nszeGwinea} and \eqref{nszeGwineb}, we obtain  \eqref{thmmaInb}.

The subtrahend on the right of   \eqref{thmmaInc} is the number of the reducible members of $\Pernik h k$. This implies \eqref{thmmaInc}. 

For $\pi\in \Tpernik h k$, let $s=\pi(1)$. There are exactly $\tpernik {h-1}k$ many such $\pi$ with $s=1$; this gives the first summand in \eqref{thmmaInd}. Next, assume that $s>1$, and note that $\pi(s)=1$ since $\pi^2=\id$. Then, in the second row of the matrix
\[\begin{pmatrix}
1&2&\dots&s-1&s&s+1&\dots&h\cr
s&\pi(2)&\dots&\pi(s-1)&1&\pi(s+1)&\dots&\pi(h)
\end{pmatrix},
\]
there are $s-1$ inversions of the form $(x,1)$, $s-2$ inversions of the form $(s,y)$ with $y\neq 1$, and we also have the inversions of $\sigma=\restrict\pi{\set{1,\ldots,h}-\set{1,s}}$. Therefore, $\sigma$ has $k-(s-1+s-2)$ inversions, whence $\sigma$   can be selected in $\tpernik {h-2}{k-2s+3}$ ways. This explains the second part of \eqref{thmmaInd}, completing the proof of equation \eqref{thmmaInd}.

Finally, the argument for 
\eqref{thmmaIne} is essentially the same as that for \eqref{thmmaInc} since the 
subtrahend in \eqref{thmmaIne} is the number of reducible members of $\Tpernik h k$.
\end{proof}

\subsection*{Slim, semimodular diagrams}
Due to Lemma~\ref{lMsckLs}\eqref{lMsckLsc}, the first part of the previous proof for  \eqref{thmmaIna} clearly yields the following statement. Based on Lemma~\ref{rrodrigE}, it  gives an effective way to count the diagrams in question.

\begin{proposition}\label{prodzSkW} Up to similarity, the number $\numssd n$ of planar, slim, semimodular lattice diagrams with $n$ elements is 
\[\numssd n=\sum_{h=0}^{n-1} \pernik h{n-h-1}\text.
\] 
\end{proposition}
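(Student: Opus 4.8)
The plan is to read off $\numssd n$ directly from the two structural results already established, organizing the count by the length of the diagram. First I would recall that, by Lemma~\ref{lMsckLs}\eqref{lMsckLsc}, for each fixed $h$ the assignment $D\mapsto\pi(D)$ is a bijection between the slim, semimodular, planar diagrams of length $h$ (taken up to similarity, as we always do) and the set $S_h$ of permutations of $\set{1,\ldots,h}$. Since every slim, semimodular diagram has a well-defined length, the totality of such diagrams is partitioned according to $h$, and counting diagrams of size $n$ therefore amounts to counting, over all admissible $h$, those permutations $\pi\in S_h$ whose associated diagram has exactly $n$ elements.

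The second step is to turn the size condition into a condition on inversions. By Proposition~\ref{lMainvnoMla}, the diagram $D$ corresponding to $\pi\in S_h$ satisfies $|D|=h+1+\inv\pi$. Hence $|D|=n$ holds if{}f $\inv\pi=n-h-1$, and for fixed $h$ the number of such permutations is exactly $\pernik h{n-h-1}=|\Pernik h{n-h-1}|$ by the definition of $\Perniksign$.

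The last step is to fix the range of summation. Because $\inv\pi\geq 0$, the equation $\inv\pi=n-h-1$ can have a solution only when $h\leq n-1$; for larger $h$ the corresponding term vanishes anyway under the convention that $\pernik h k=0$ for $k<0$. The extreme value $h=0$ contributes the one-element lattice (the empty permutation has $\inv=0$ and yields $|D|=1$), which is precisely what is needed when $n=1$. Summing the per-length counts over $h=0,\ldots,n-1$ then gives $\numssd n=\sum_{h=0}^{n-1}\pernik h{n-h-1}$.

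I do not expect a genuine obstacle here: the statement is essentially a bookkeeping consequence of Lemma~\ref{lMsckLs}\eqref{lMsckLsc} and Proposition~\ref{lMainvnoMla}, exactly as the remark preceding the proposition indicates. The only points requiring a moment of care are the correct upper limit $n-1$ of the sum and the inclusion of the degenerate length-$0$ diagram; both are settled at once by the nonnegativity of $\inv\pi$ together with the size formula $|D|=h+1+\inv\pi$.
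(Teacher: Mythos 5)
Your proof is correct and follows essentially the same route as the paper: partition the diagrams by length $h$, use the bijection of Lemma~\ref{lMsckLs}\eqref{lMsckLsc} and the size formula $|D|=h+1+\inv\pi$ of Proposition~\ref{lMainvnoMla}, and sum over $h$. The paper's own proof is just a terser version of this argument; your extra care with the summation range and the $h=0$ case is fine but not a substantive difference.
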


\begin{proof} If $\pi\in S_h$ determines an $n$-element diagram, then  $\inv \pi$ equals  $n-h-1$ by Proposition~\ref{lMainvnoMla}. This together with Lemma~\ref{lMsckLs}\eqref{lMsckLsc} implies our statement.
\end{proof}

\subsection*{Slim distributive diagrams}
As opposed to the previous statement, we are going to enumerate these diagrams of a given  length rather than a given size.
Let 
$C_h=(h+1)^{-1}\cdot\dsty{\binomial {2h}h}$ denote the $h$-th \emph{Catalan number}, see, for example, M.~B\'ona~\cite{bona}. 

\begin{proposition}\label{pRdDlNgTnh} Up to similarity, 
the number  of planar, slim, distributive lattice diagrams of length $h$  is $C_h$. 
\end{proposition}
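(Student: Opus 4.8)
The plan is to establish a bijection between planar, slim, distributive lattice diagrams of length $h$ (up to similarity) and some standard family of objects counted by the Catalan number $C_h$. By Lemma~\ref{lMsckLs}\eqref{lMsckLsc}, such diagrams are in bijection with permutations $\pi\in S_h$, and by Proposition~\ref{lMadisjLLp} the distributive ones correspond exactly to the permutations that avoid the $321$ pattern. So the statement reduces to the purely combinatorial claim that the number of $321$-avoiding permutations in $S_h$ equals $C_h$. This is a classical fact, but since the paper develops its combinatorics from scratch, I would give a self-contained argument rather than merely cite it.

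First I would recall, via Proposition~\ref{lMadisjLLp} and Lemma~\ref{PiOthDflmA}, that $D$ is distributive if{f} $\pi=\pi(D)$ contains no $321$ pattern, i.e.\ there are no indices $i<j<k$ with $\pi(i)>\pi(j)>\pi(k)$. Thus I must count $\{\pi\in S_h:\pi\text{ is }321\text{-avoiding}\}$. The cleanest route is to exhibit a bijection onto a Catalan family. One convenient choice is to use the structure of a $321$-avoiding permutation as a shuffle of two increasing subsequences: since $\pi$ has no decreasing subsequence of length three, the Erd\H os--Szekeres observation guarantees that the set $\{1,\dots,h\}$ splits into the positions of the left-to-right maxima and the remaining positions, each of which carries an increasing subsequence of values. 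I would make this decomposition precise and show that recording, for each position, whether it is a left-to-right maximum together with the value placed there, encodes $\pi$ faithfully; the resulting data is a lattice path (e.g.\ a Dyck path of length $2h$) obtained by reading ``up'' for a new maximum and ``down'' otherwise, with the weak-staircase condition on the non-maxima forced by $321$-avoidance.

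Concretely, the key steps, in order, are: (1) invoke Proposition~\ref{lMadisjLLp} to reduce to counting $321$-avoiding $\pi\in S_h$; (2) prove that $321$-avoidance is equivalent to saying the values at the non-left-to-right-maxima positions form an increasing subsequence; (3) from this equivalence, define a map sending $\pi$ to a Dyck path of semilength $h$, recording the interleaving of maxima and non-maxima; (4) check that this map is a bijection by describing its inverse, reconstructing $\pi$ from the path by greedily assigning the smallest unused value to each ``down'' step and the forced value to each ``up'' step; (5) conclude that the count is $C_h$, the number of Dyck paths of semilength $h$.

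The main obstacle is step (4): verifying that the path-to-permutation reconstruction is well-defined and yields a genuinely $321$-avoiding permutation, and that it inverts the forward map. One must confirm that the greedy assignment never conflicts, that the two increasing chains are correctly recombined into a single permutation whose left-to-right maxima are exactly the intended positions, and that no $321$ pattern is accidentally created. I expect this to be the delicate part, whereas steps (1)--(3) and (5) are either immediate from earlier results in the paper or standard. An alternative, possibly shorter, approach would be to verify that $|\{321\text{-avoiding }\pi\in S_h\}|$ satisfies the Catalan recurrence $C_h=\sum_{i=0}^{h-1}C_iC_{h-1-i}$ by conditioning on the position of the value $1$ (or on the first descent), but the bijective proof is more transparent and fits the paper's style.
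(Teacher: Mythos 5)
Your reduction is exactly the paper's proof: by Lemma~\ref{lMsckLs}\eqref{lMsckLsc} and Proposition~\ref{lMadisjLLp}, the count equals the number of $321$-avoiding permutations in $S_h$, which the paper then simply cites as $C_h$ from M.~B\'ona~\cite[Corollary 4.7]{bona}. Your additional Dyck-path bijection is a standard and workable (if unnecessary) expansion of that cited classical fact, so the proposal is correct and essentially the same argument.
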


\begin{proof} By Lemma~\ref{lMsckLs}\eqref{lMsckLsc} and Proposition~\ref{lMadisjLLp}, we need the number of permutations in $S_h$ that do not contain the pattern $321$. This number is $C_h$ by M.~B\'ona~\cite[Corollary 4.7]{bona}.
\end{proof}

\subsection{Calculations with Computer Algebra}\label{SeCtcompscc}
It follows easily from Theorem~\ref{thmmaIn} that $\numssl 1=1$, $\numssl 2=1$, $\numssl 3=1$,$\numssl 4=2$, 
 $\numssl 5=3$, $\numssl 6=5$, $\numssl 7=9$, $\numssl 8=16$, $\numssl 9=29$, and these values can easily be checked by listing the corresponding lattices. One can use computer algebra to obtain, say, $\numssl {20}=33\,701$,  $\numssl {30}=25\,051\,415$, and  $\numssl {40}= 19\,057\,278\,911$. In a typical personal computer (with parameters Intel(R) Core(TM)2 Duo CPU E8400, 3.00 GHz, 1.98 GHz, 3.25 GB RAM), one can even compute 
\[ \numssl {50}=14\,546\,017\,036\,127
\]
in three hours. This indicates that semimodularity together with slimness is a strong assumption since  
it took several days on a parallel supercomputer to count \emph{all} 18-element lattices, see  J.~Heitzig and  J.~Reinhold~\cite{r:heitzigreinhold}.

%
%
%
%

\end{document}